\documentclass[12pt,reqno]{amsart}

\usepackage{verbatim, amssymb, enumitem, mathtools}

\usepackage[breaklinks=true,colorlinks=true,linkcolor=blue,citecolor=red,urlcolor=blue,psdextra,pdfencoding=auto]{hyperref}
\allowdisplaybreaks

\usepackage{orcidlink}

\renewcommand \a{\alpha}
\renewcommand \b{\beta}

\newcommand \la{\lambda}
\newcommand \ve{\varepsilon}

\newcommand \br{\mathbb{R}}
\newcommand \bc{\mathbb{C}}
\newcommand \bh{\mathbb{H}}
\newcommand \bo{\mathbb{O}}

\newcommand \rk{\operatorname{rk}}

\newcommand \Span{\operatorname{Span}}
\newcommand \Tr{\operatorname{Tr}}

\newcommand \SO{\mathrm{SO}}
\newcommand \Sp{\mathrm{Sp}}
\newcommand \SU{\mathrm{SU}}
\newcommand \SL{\mathrm{SL}}
\newcommand \Ff{\mathrm{F}_4}
\newcommand \Spin{\mathrm{Spin}}

\newcommand \cI{\mathcal{I}}

\newcommand \cK{\mathcal{K}}

\newcommand \sK{\mathsf{K}}
\newcommand \sS{\mathsf{S}}

\newcommand \sT{\mathsf{T}}

\newcommand\ag{\mathfrak a}

\newcommand\g{\mathfrak g}

\newcommand\h{\mathfrak h}

\newcommand\m{\mathfrak m}
\newcommand \so{\mathfrak{so}}

\newcommand \gl{\mathfrak{gl}}

\newcommand \s{\mathfrak{s}}

\newcommand \<{\langle}
\renewcommand \>{\rangle}
\newcommand \ip{\<\cdot,\cdot\>}

\newcommand \GL{\mathrm{GL}}

\newtheorem{theorem}{Theorem}
\newtheorem*{theorem*}{Theorem}

\newtheorem*{corollary*}{Corollary}
\newtheorem*{conj*}{Conjecture}
\newtheorem{lemma}{Lemma}
\newtheorem{proposition}{Proposition}
\newtheorem*{prop*}{Proposition}

\theoremstyle{definition}

\newtheorem*{definition*}{Definition}

\theoremstyle{remark}

\newtheorem*{notation*}{Notation}
\newtheorem*{algorithm*}{Algorithm}
\newtheorem*{example*}{Example}

\begin{document}

\title{Quadratic Killing tensors on some symmetric spaces of higher rank} 

\author[An Ky Nguyen]{An Ky Nguyen \orcidlink{0009-0001-8929-7060}}
\address{Department of Mathematical and Physical Sciences, La Trobe University, Melbourne, Victoria, 3086, Australia}
\email{AnKy.Nguyen@latrobe.edu.au}

\author[Yuri Nikolayevsky]{Yuri Nikolayevsky \orcidlink{0000-0002-9528-1882}}
\address{Department of Mathematical and Physical Sciences, La Trobe University, Melbourne, Victoria, 3086, Australia}
\email{y.nikolayevsky@latrobe.edu.au}

\thanks {The second named author was partially supported by ARC Discovery grant DP210100951.}

\subjclass[2020]{53C35, 53B20, 53D25}

\keywords{Killing tensor field, symmetric space}

\begin{abstract}
All Killing tensor fields on the spaces of constant curvature and on the complex projective space are decomposable, that is, can be represented as the sum of symmetric tensor products of Killing vector fields (equivalently, every polynomial integral of the geodesic flow is a polynomial in the linear integrals). This is no longer true for quadratic Killing tensor fields on the quaternionic projective spaces $\bh P^n, \, n \ge 3$, and on the Cayley projective plane $\bo P^2$. We prove that for the real Grassmannians and for the spaces $\SL(n)/\SO(n)$, all quadratic Killing tensor fields are decomposable.
\end{abstract}

\maketitle

\section{Introduction}
\label{s:intro}

A \emph{Killing tensor field} $\cK=\cK(x)_{i_1\dots i_d}$ of rank $d \ge 1$ on a Riemannian manifold $(M,ds^2=g_{ij}\,dx^i dx^j)$ is a covariant symmetric tensor field that satisfies the Killing equation
\begin{equation}\label{eq:defK}
  \cK_{(i_1\dots i_d,j)}=0,
\end{equation}
where the comma denotes the covariant derivative and the parentheses denote the symmetrisation by all indices. This definition is equivalent to the fact that the function $\xi\in T_xM \mapsto \cK(x)_{i_1\dots i_d} \xi^{i_1} \cdots \xi^{i_d}$ polynomial in the velocities is an integral of the geodesic flow of $(M,ds^2)$: for any naturally parameterised geodesic $s \mapsto \gamma(s)$ of $(M,ds^2)$, the function $s \mapsto \cK(\gamma(s))_{i_1\dots i_d} (\dot{\gamma}(s))^{i_1} \dots (\dot{\gamma}(s))^{i_d}$ is constant.

Killing tensor fields of rank $d=1$ are called \emph{Killing \emph{(}co\emph{)}vector fields}. It is well known that a vector field is Killing if and only if the $1$-parametric group of diffeomorphisms of $M$ which it generates is a group of isometries. 

The space of all Killing tensor fields on $(M,ds^2)$ forms an associative, commutative, graded algebra $\sK(M)$ relative to the symmetric tensor product. It contains a subalgebra $\sS(M)$ generated by Killing vector fields. The elements of $\sS(M)$ are called \emph{decomposable}.

In general, a Killing tensor field of rank $d \ge 2$ on a Riemannian manifold does not have to be decomposable, even if one disregards the polynomials in the metric tensor. Historically, the first example is due to Darboux: a $2$-dimensional Liouville metric $ds^2=(\la(x)+\mu(y)) (dx^2 + dy^2)$ admits quadratic Killing tensors not proportional to the metric, but in general, has trivial isometry group. On the other hand, any Killing tensor field on the space of constant curvature is decomposable~\cite{Tho,ST1,Tak}. Motivated by these results and by the fact that symmetric spaces have a ``large" isometry group, the following question has been suggested in \cite[Question~3.9]{BMMT}: ``In a symmetric space, is every Killing tensor field decomposable?''

The answer to this question is in the positive for the complex projective spaces~\cite[Corollary~5]{East}, \cite[Theorem~2.2]{ST2}, but is in the negative already for quadratic Killing tensor fields on almost all other compact rank-one symmetric spaces: on the quaternionic projective spaces $\bh P^n=\Sp(n+1)/(\Sp(n)\Sp(1))$ with $n \ge 3$, and on the Cayley projective plane $\bo P^2 = \Ff/\Spin(9)$ \cite{MN1}.

At present, the quadratic Killing tensor fields on \emph{rank one} symmetric spaces are fully understood: for the compact ones this is done in~\cite[Theorem~4]{MN2}, and the passage to the noncompact duals follows from~\cite[Theorem~2]{MN2}. Moreover, we know that the algebra of Killing tensors (of \emph{all ranks}) on the quaternionic projective space is generated by the linear and the quadratic Killing tensors~\cite[Theorem~1]{DN}.

For symmetric spaces of \emph{higher rank}, only the quadratic Killing tensors have been partially understood so far, with two different systematic approaches via the prolongation procedure of the overdetermined system~\eqref{eq:defK} suggested in~\cite{EL} and via the study of top slot quadratic Killing tensors in~\cite[Theorem~3]{MN2}. The known results include the proof that quadratic Killing tensors on the classical compact groups with bi-invariant metric are decomposable~\cite{MNN}, as also are the quadratic Killing tensors on the symmetric space $\SU(6)/\Sp(3)$ \cite[Theorem~18]{EL}, while on the symmetric space $\mathrm{E}_6/\Ff$, there exists a $78$-dimensional $\mathrm{E}_6$-module of quadratic Killing tensors (``hidden symmetries'') which complements the module of the decomposable ones.

In the present paper, we fully classify quadratic Killing tensor fields on the real Grassmannians and the spaces $\SL(n)/\SO(n)$ by proving that they are decomposable.

\begin{theorem} \label{t:Gpq}
  Any quadratic Killing tensor field on the real Grassmannian $G(p,p+q) = \SO(p+q)/\SO(p) \times \SO(q), \; p, q \ge 1$, is decomposable.
\end{theorem}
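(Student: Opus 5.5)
We will work on the compact symmetric space $M=G/H$ with $G=\SO(n)$, $n=p+q$, $H=\SO(p)\times\SO(q)$, and $\g=\h\oplus\m$, where $\m\cong\br^{p\times q}$ is the isotropy representation. The approach is representation-theoretic. The space $\sK_2(M)$ of quadratic Killing tensors is a finite-dimensional $G$-module. The decomposable ones form the image of the natural map $S^2\g\to\sK_2(M)$, which sends $X\odot Y$ to the symmetric product of the corresponding Killing fields. We therefore only have to show that every irreducible $G$-submodule $V\subset\sK_2(M)$ lies in this image.

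By Frobenius reciprocity, a $G$-module of tensor fields corresponds to an $H$-equivariant evaluation at the base point $o$, that is, an $H$-map $V\to S^2\m^*$. Moreover, a Killing tensor is determined by a finite jet at $o$, obtained by prolongation of \eqref{eq:defK}; on a symmetric space the $2$-jet suffices. The first step is to use the structure theory of \cite[Theorem~2, Theorem~3]{MN2} (duality and top-slot analysis) together with prolongation in the spirit of \cite{EL}. This reduces the problem to an algebraic one: describe the space of pairs $(K_0,K_1)$, with $K_0\in S^2\m^*$ the value at $o$ and $K_1\in\m^*\otimes S^2\m^*$ the first derivative, that satisfy the linear compatibility conditions coming from the curvature $R(X,Y)Z=-[[X,Y],Z]$. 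These conditions are the integrability conditions of the prolonged system.

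Concretely, we will write $K$ on $\m=\br^{p\times q}$ and decompose $S^2\m$ under $\SO(p)\times\SO(q)$. It splits as $S^2\br^p\otimes S^2\br^q\oplus\Lambda^2\br^p\otimes\Lambda^2\br^q$, and each factor splits further into trace and traceless parts. For the derivative we decompose $\m\otimes S^2\m$ similarly. For each $H$-isotypic component we then impose the conditions
\begin{equation*}
K_1 \text{ fully symmetric (Killing)},\qquad \nabla^2 K \text{ consistent with } R\cdot K,
\end{equation*}
and determine the solution space as an $H$-module. This gives an explicit bound on the $G$-module $\sK_2(M)$, in the form of a list of highest weights with multiplicities. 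We will compare this with the image of $S^2\g=S^2\Lambda^2\br^n$. That image decomposes as $\SO(n)$-modules into the Young-diagram components $(2,2)$, $(1,1,1,1)$, $(2)$ and $(0)$, modulo the kernel. The kernel of $S^2\g\to\sK_2(M)$ is computed by checking which components vanish under restriction to $\m$ at $o$ and along the orbit. Since $M$ is irreducible, the Casimir relation accounts for at most a trivial summand.

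The main obstacle is the middle step: the prolongation and curvature computation giving an upper bound for $\sK_2(M)$ that matches exactly the image of $S^2\g$. In contrast to $\bh P^n$, where an extra module appears, we expect that the rank $\ge2$ curvature of the Grassmannian kills every candidate apart from those already realized. To carry this out we will first restrict to the top-slot components, whose highest weights are not dominated by weights of $S^2\g$, and show by \cite[Theorem~3]{MN2} that they vanish. This should reduce the remaining candidates to the four weights above, with multiplicities not exceeding those realized. Small cases ($p=1$, which is the sphere and already known, and low $p,q$ where the $\Lambda^2\otimes\Lambda^2$ component degenerates or $\so(4)$ splits) will be handled separately.
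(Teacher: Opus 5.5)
\textbf{There is a genuine gap.} What you have written is a programme rather than a proof. The step you yourself call the main obstacle is an upper bound on $\sK_2(M)$ that matches the image of $S^2\g$. That step is the entire content of the theorem, and you only assert it (``we expect'', ``this should reduce'').

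The set-up you propose for that step is also wrong. The $2$-jet of a quadratic Killing tensor at $o$ is not a pair $(K_0,K_1)$. The prolongation of \eqref{eq:defK} has a third free piece at second order: a $(0,4)$-tensor on $\m$ with the curvature-type symmetries \eqref{eq:K2c}. This second-order piece is the ``top slot'' of \cite{MN2}. Proposition~\ref{p:topslot2} reduces the whole question to the space $\sT(\m)$ of such tensors satisfying \eqref{eq:K21} and \eqref{eq:K22}, so this is where all the difficulty, and any hidden symmetry, lives. Two smaller points: the Killing condition on $K_1$ is that its full symmetrisation vanishes, not that $K_1$ is fully symmetric. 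And the Casimir element of $S^2\g$ maps to the metric, not to zero; for $p,q\ge 2$ the map $S^2\g\to\sK_2(M)$ is in fact injective.

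For the same reason you misread \cite[Theorem~3]{MN2}. ``Top slot'' means this second-order component, not $G$-components whose highest weights are not dominated by those of $S^2\g$, and the theorem does not make anything vanish. It turns decomposability into the concrete algebraic claim that every $K\in\sT(\m)$ has the form \eqref{eq:K4}. Since $\sT(\m)$ always contains the nonzero decomposable tensors \eqref{eq:K4}, one for each symmetric $S$ on $\h$, a hidden module would show up as excess multiplicity. No weight-domination heuristic can replace actually solving \eqref{eq:K21}--\eqref{eq:K22} on tensors with symmetries \eqref{eq:K2c}, uniformly in $p$ and $q$.

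The paper does exactly this, by a direct argument:
\begin{enumerate}
\item Lemma~\ref{l:threecommute} gives $K(X_1,X_2,X_3,X_4)=0$ whenever $X_1$ commutes with $X_3$ and $X_4$.
\item Proposition~\ref{p:FXYZ}\eqref{it:F3} then produces a bilinear $\Psi$ with \eqref{eq:KPsi}. It is proved on simple elements $x\wedge u$ via ideal membership in $\<x,y\>,\<x,z\>,\<u,v\>,\<u,w\>$.
\item The symmetry $K(X_1,X_2,X_3,X_4)=K(X_4,X_3,X_2,X_1)$ forces $\Psi(X_3,X_1)=0$ whenever $[X_1,X_3]=0$. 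This is shown first for simple $X_1$, then in general by Lemma~\ref{l:rk1all}.
\item Proposition~\ref{p:FXYZ}\eqref{it:F2} gives $\Psi(X,Y)=\psi([X,Y])$, and symmetrising in $X_1,X_2$ yields $S=\frac12(\psi+\psi^t)$.
\end{enumerate}
A multiplicity count could in principle replace this. It would only work after you determine the $H$-module of solutions inside $\sT(\m)$ for all $p,q$, and your proposal gives no indication of how that computation would go.
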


\begin{theorem} \label{t:SLSO}
  Any quadratic Killing tensor field on the symmetric space $\SL(n)/\SO(n),$ $n \ge 1$, is decomposable.
\end{theorem}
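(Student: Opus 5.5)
The statement to prove is Theorem~\ref{t:SLSO}. The plan is to reduce it to the compact dual via the duality principle of \cite[Theorem~2]{MN2}, and then to handle the compact dual $\SU(n)/\SO(n)$ by the same method used for Theorem~\ref{t:Gpq}.

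\emph{Step 1: duality.} By \cite[Theorem~2]{MN2}, quadratic Killing tensors on a symmetric space of noncompact type correspond to those on its compact dual. The correspondence complexifies the prolongation data at a point, $\m\mapsto i\m$, and preserves decomposability, since Killing vector fields also correspond via $\g=\h\oplus\m\mapsto \h\oplus i\m$. So it suffices to show that every quadratic Killing tensor on $M=\SU(n)/\SO(n)$ is decomposable.

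\emph{Step 2: reduction to a module computation.} A quadratic Killing tensor is determined by a finite set of prolongation data at the base point $o$: the value $\cK(o)\in S^2\m$, the first covariant derivative, and so on, subject to algebraic constraints involving the curvature. Since $M$ is homogeneous, the space $\sK_2(M)$ is a finite-dimensional $\SU(n)$-module. Decomposable tensors form the image of the map $S^2\g\to\sK_2(M)$, $X\odot Y\mapsto X^\flat\odot Y^\flat$, where $\g=\mathfrak{su}(n)$. So the goal is to show $\dim\sK_2(M)=\dim(\text{image})$.

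I would use the framework of \cite[Theorem~3]{MN2}: a quadratic Killing tensor whose restriction to a maximal flat $\ag\subset\m$ vanishes (a ``top slot'' argument) is controlled by a small set of data. Concretely, I would proceed as follows.
\begin{enumerate}[label=(\roman*)]
\item Decompose $S^2\m$ under $K=\SO(n)$. Here $\m$ is the traceless symmetric matrices, so $S^2\m$ splits into pieces indexed by two-row Young diagrams together with trivial and adjoint-type pieces.
\item For each $K$-component, analyse the values $\cK(o)$ allowed by the integrability condition. This condition comes from the Killing equation, using the fact that geodesics through $o$ are $\exp(tX)o$ and that $\cK(\dot\gamma,\dot\gamma)$ is constant along them. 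Restricting to a flat $\ag$ (diagonal matrices) makes the equations explicit in terms of root data of type $A_{n-1}$.
\item Show that each admissible $\cK(o)$ is realised, modulo decomposable tensors, by a decomposable one. Equivalently, show that any Killing tensor with $\cK(o)=0$ and vanishing first prolongation is zero, and that the first prolongation space is matched by $\Lambda^2$-type terms coming from $\g\otimes\g$.
\end{enumerate}

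\emph{Step 3: counting and small $n$.} Compare the dimensions via Weyl's formula. The image of $S^2\g$ is $S^2\mathfrak{su}(n)$ minus the kernel, which consists of relations coming from $\h$ acting trivially at the relevant order; I would identify this kernel explicitly. The cases $n\le 3$ are handled directly: $n=2$ is the hyperbolic plane, which has constant curvature, and $n=1$ is trivial.

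\emph{Main obstacle.} The main obstacle is step~(ii): proving that no ``hidden'' $\SU(n)$-submodule of Killing tensors exists. Such modules do occur for $\bh P^n$ and for $\mathrm{E}_6/\Ff$, so the argument must genuinely use the $A_{n-1}$ root structure and the fact that all root multiplicities equal $1$. First I would show that a Killing tensor vanishing on $\ag$ at every point of the flat (after using $K$-invariance) is forced, by the Jacobi-field equations along the flat and the $W$-symmetry, to be a combination of products of Killing fields. The remaining part of $\cK|_{\ag}$ then lies in the $W$-invariant quadratic polynomials on $\ag$, which are captured by the tensors $X^\flat\odot Y^\flat$.
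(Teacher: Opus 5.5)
\textbf{There is a genuine gap: the decisive step is never carried out.} Showing that no non-decomposable quadratic Killing tensors exist is exactly what you call the ``main obstacle'' in step (ii), and you leave it as a heuristic. The other steps do not close it.
\begin{enumerate}
\item The duality in Step~1 is legitimate, but it only trades $\SL(n)/\SO(n)$ for $\SU(n)/\SO(n)$, with the same difficulty.
\item The dimension comparison in Step~3 is circular. Computing $\dim \sK_2(M)$ means solving the prolongation system, which is the whole problem.
\item The proposed mechanism, restricting to a maximal flat and using Weyl-group symmetry, cannot detect hidden symmetries. The top-slot framework is not about Killing tensors vanishing on a flat. By Proposition~\ref{p:topslot2}, decomposability is encoded entirely in the tensors $K \in \sT(\m)$. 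By Lemma~\ref{l:threecommute}, every such $K$ vanishes whenever its arguments pairwise commute, in particular identically on a Cartan subspace $\ag$. So ``vanishing on $\ag$'' is automatic and carries no information. Indeed, $\SU(3)/\SO(3)$ and $\mathrm{E}_6/\Ff$ have the same flat and the same Weyl group $S_3$, yet only the latter carries hidden quadratic Killing tensors. Whatever role root multiplicities play must enter through the off-flat components, which your sketch never touches.
\item Your small-$n$ remark actually treats only $n=1,2$. The case $n=3$ (and, in the paper, $n=4$) needs a separate verification.
\end{enumerate}

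\textbf{What is missing is an argument working directly with $K \in \sT(\m)$ off the flat.} The paper passes to $\GL(n)/\SO(n)$, so that all symmetric matrices, including the rank-one elements $x \odot x$, are available. It settles $n=3,4$ by computer algebra, and for $n \ge 5$ proceeds as follows.
\begin{enumerate}
\item It proves $K(X_1,X_2,X_3,X_4)=0$ whenever $[X_1,X_3]=[X_1,X_4]=0$ (Lemma~\ref{l:SLK1340}). This is done by restricting to the Lie triple systems $V \odot V$ with $\dim V \le 4$, where the small cases apply.
\item It proves the factorisation result Proposition~\ref{p:SLFXYZ}. A trilinear form, symmetric in its last two arguments and vanishing when $[X,Y]=[X,Z]=0$, equals $\<\Phi Z,[X,Y]\>+\<\Phi Y,[X,Z]\>$. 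On rank-one elements this says that a polynomial vanishing on $\{\<x,y\>=\<x,z\>=0\}$ lies in the ideal generated by these two quadrics. That is proved via Nagata's theorem that the quotient by a quadric of rank $\ge 6$ is a unique factorisation domain.
\item It deduces $K=\<\Psi(X_2,X_3),[X_1,X_4]\>+\<\Psi(X_2,X_4),[X_1,X_3]\>$. Using $K(X_1,X_2,X_3,X_4)=K(X_4,X_3,X_2,X_1)$, it shows that $\Psi$ vanishes on commuting pairs, hence $\Psi(X,Y)=\psi([X,Y])$. Symmetrising gives $S=\frac12(\psi+\psi^t)$ in \eqref{eq:K4}.
\end{enumerate}
None of these ingredients, or any substitute for them, appears in your plan.
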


Note that by~\cite[Theorem~2]{MN2} these theorems also imply decomposability of quadratic Killing tensor fields on the spaces $\SU(n)/\SO(n)$ and on the noncompact duals of the Grassmannians. 

The authors thank Vladimir Matveev for useful discussions.

\section{Preliminaries}
\label{s:prel}

\subsection{Symmetric spaces}
\label{ss:ss}

Standard references on geometry of Riemannian symmetric spaces are~\cite{Hel} and~\cite{Wolf}. Here we recall some necessary facts and introduce some notation. Let $M=G/H$, be an irreducible, simply connected, Riemannian globally symmetric space, where $G$ is the identity component of the full isometry group and $H \subset G$ is the isotropy subgroup. At the level of Lie algebras, one has the Cartan decomposition $\g = \h \oplus \m$, orthogonal relative to the Killing form $B$ on $\g$, where $\g$ and $\h$ are the Lie algebras of $G$ and $H$ respectively, and $\m$ is an $\h$-module which can be identified with the tangent space $T_oM$ to $M$ at the point $o = eH$. One has $[\m,\m], [\h,\h] \subset \h$ and $[\h,\m] \subset \m$. The inner products on $\h$ and on $\m$ are defined by $\|A\|^2=-\mu B(A,A)$ and $\|X\|^2= \ve\mu B(X,X)$, for $A \in \h, \, X \in \m$, where $\mu > 0$ and $\ve = 1$ (respectively, $\ve = -1$) if $M$ is of noncompact (respectively, of compact) type. Then for $A \in \h$ and $X, Y \in \m$ we have $\<[A,X],Y\> = -\ve \<A,[X,Y]\>$. The curvature tensor $R$ of $M$ at the point $o$ is given by $R(X,Y)Z = -[[X,Y],Z]$, for $X, Y, Z \in \m$.

A subspace $\s \subset \m$ is called a Lie triple system if $[[\s,\s],\s] \subset \s$. A connected submanifold $M' \subset M$ passing through $o$ is totally geodesic if and only if it is a domain on the exponent of a Lie triple system $T_oM' \subset \m$. Any totally geodesic submanifold of a symmetric space is by itself (locally) symmetric. Note that the restriction of a Killing tensor field on $M$ to a totally geodesic $M' \subset M$ gives a Killing tensor field on $M'$ (as the geodesics of $M'$ are geodesics of $M$).

A maximal abelian subspace $\ag \subset \m$ is called a Cartan subspace. All Cartan subspaces are conjugate under the action of the isotropy group $H$ on $\m$. A Cartan subspace is trivially a Lie triple system; its exponent (when $M$ is compact) is a flat totally geodesic torus in $M$. The dimension of $\ag$ is called the rank of the symmetric space $M$.

\subsection{Decomposability}
\label{ss:MN}

We will use the following necessary and sufficient condition of decomposability of quadratic Killing tensor fields~\cite[Theorem~3, Remark~2]{MN2}:
	
\begin{proposition} \label{p:topslot2}
		Let $M= G/H$ be an irreducible, simply connected globally symmetric space. Denote by $\sT(\m)$ the linear space of (constant) tensors $K$ of type $(0,4)$ on $\m$ which satisfy the following equations, for all $X, Y, Z, P \in \m$:
		\begin{gather}
			K(X,Y,Z,P) = K(Y,X,Z,P) = K(X,Y,P,Z), \quad \sigma_{X,Y,Z} \big(K(X,Y,Z,P)\big)=0, \label{eq:K2c}\\
			K(X,X,P,R(X,P)P)=K(P,P,X,R(P,X)X), \label{eq:K21}\\
			K(X,X,R(X,P)P,R(X,P)P)=K(P,P,R(P,X)X,R(P,X)X), \label{eq:K22}
		\end{gather}
		where $\sigma_{X,Y,Z}$ denotes the sum over cyclic permutations of $(X,Y,Z)$.
		
		Every quadratic Killing tensor field on $M$ is decomposable if and only if every element $K \in \sT(\m)$ is decomposable in the sense that there exists a symmetric operator $S: \h \to \h$ such that
  \begin{equation}\label{eq:K4}
    K(X,Y,Z,P) = \<S([Y,Z]), [X,P]\> + \<S([Y,P]), [X,Z]\>.
  \end{equation}
\end{proposition}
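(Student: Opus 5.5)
The plan is to pass from global Killing tensors to finite-dimensional linear algebra at the base point $o$, using the prolongation of the Killing equation~\eqref{eq:defK} for $d=2$ together with the $G$-equivariance of everything involved. Denote by $\sK^2(M)$ the space of quadratic Killing tensors, a finite-dimensional $G$-module. Prolongation shows that $\cK\in\sK^2(M)$ is determined by a finite jet at $o$, and that this jet has three components:
\begin{itemize}
\item $\cK(o)\in S^2\m^*$;
\item $\nabla\cK(o)$, whose symmetrisation vanishes;
\item a ``top slot'' tensor $K$ of type $(0,4)$ on $\m$ with the algebraic symmetries~\eqref{eq:K2c}.
\end{itemize}
The top slot $K$ is the Young-projected part of $\nabla^2\cK(o)$. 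I would filter $\sK^2(M)$ by order of vanishing at $o$, namely
\[
F_0=\sK^2(M)\supset F_1=\{\cK(o)=0\}\supset F_2=\{\cK(o)=0,\ \nabla\cK(o)=0\}.
\]
The map $\cK\mapsto K$ is then injective on $F_2$.

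\textbf{Step 1: the image of $F_2$ lies in $\sT(\m)$.} I would compute $\cK$ explicitly along geodesics. Since $M$ is symmetric, along $\gamma(t)=\exp(tX)o$ the Jacobi operator is parallel. Hence the Killing equation, restricted to a geodesic and applied to parallel fields, becomes a linear ODE with constant coefficients in $R_X=R(\cdot,X)X$. Its solution expresses $\cK(\gamma(t))$ through $K$ and the functions $\cos\sqrt{\ve R_X}$ and $\sin\sqrt{\ve R_X}$ (or $\cosh$ and $\sinh$). Now require consistency in a $2$-plane $\Span(X,P)$. Take two geodesics through $o$ and compare the Taylor coefficients of the integral $\cK(\dot\gamma,\dot\gamma)$ at the next two orders. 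The terms of order $3$ and order $4$ in $t$ should give exactly~\eqref{eq:K21} and~\eqref{eq:K22}, the curvature entering once and twice respectively.

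\textbf{Step 2: the decomposable tensors correspond to~\eqref{eq:K4}.} A decomposable quadratic Killing tensor is the image of an element $Q\in S^2\g^*$ under
\[
Q\mapsto \bigl(\xi\mapsto Q(\xi^\flat,\xi^\flat)\bigr),
\]
where $\xi^\flat\in\g$ is the Killing field determined by the geodesic through $\xi$. Split $Q$ along $\g=\h\oplus\m$ into its $\h\h$-, $\h\m$- and $\m\m$-blocks. The $\m\m$- and $\h\m$-blocks can be normalised so that they account for all of $F_0/F_2$; this uses that every $1$-jet compatible with the Killing equation is realised by a decomposable tensor. The $\h\h$-block is a symmetric operator $S$ on $\h$, and a direct computation of second derivatives shows that its top slot is precisely~\eqref{eq:K4}. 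Consequently every Killing tensor is decomposable modulo $F_2$. Moreover, a Killing tensor is decomposable if and only if its $F_2$-representative has a top slot of the form~\eqref{eq:K4}. This proves the ``if'' direction, assuming Step 3.

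\textbf{Step 3 (main obstacle): every $K\in\sT(\m)$ is the top slot of some $\cK\in F_2$.} This is needed for the ``only if'' direction, since a non-decomposable $K$ must produce a non-decomposable Killing tensor. I would first define $\cK$ by the explicit geodesic formula from Step 1 on a normal neighbourhood of $o$. I would then verify the Killing equation on each totally geodesic $\exp(\Span(X,P))$, or more generally on the flats and Lie triple systems generated by $X$ and $P$. The aim is to show that~\eqref{eq:K21} and~\eqref{eq:K22} are the only obstructions: the higher Taylor conditions should follow from these two by applying $R_X$-invariance and the Jacobi identity in $\g$. Finally, $\cK$ extends globally by analyticity and simple connectedness of $M$. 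The delicate point is this closure argument, i.e.\ that no further integrability conditions appear beyond order four. I would try to settle it by writing the obstruction as a polynomial identity in $\operatorname{ad}_X$ and using that $R_X$ is diagonalisable on $\m$ with eigenvalues controlled by the restricted roots.
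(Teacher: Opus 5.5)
The paper does not prove this proposition. It imports it from \cite[Theorem~3, Remark~2]{MN2}, and the only thing checked in the text is that tensors of the form~\eqref{eq:K4} satisfy \eqref{eq:K2c}--\eqref{eq:K22}. Read as a self-contained proof, your sketch has two genuine gaps, and the first is an actual error.

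\textbf{Step 2 is false as stated.} You claim that every $1$-jet of a Killing tensor is the $1$-jet of a decomposable one, so that every Killing tensor is decomposable modulo $F_2$. If $\cK_Q$, $Q\in S^2\g$, vanishes at $o$, then $Q_{\m\m}=0$. Only the $\h\m$-block then contributes to $\nabla\cK_Q(o)$, so it has the form $(Y;X,X)\mapsto\<\Phi X,[Y,X]\>$ for a linear $\Phi:\m\to\h$. In higher rank these form a proper subspace of the $(2,1)$-tensors, since they vanish on a Cartan subspace. Non-decomposable Killing tensors can have $1$-jets outside it:
\begin{itemize}
\item Take $\mathrm{E}_6/\Ff$, with $\m$ the traceless Albert algebra $J_0$ acting by multiplications $L_X$, and set $\phi(X)=L_{(X\circ X)_0}\in\m$.
\item The Jordan identity $[L_X,L_{X\circ X}]=0$ gives $[X,\phi(X)]=0$. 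Hence $\cK_u(g\,o)(dg\,X,dg\,X)=B(u,\mathrm{Ad}_g\phi(X))$, $u\in\g$, are Killing tensors; this is the $78$-dimensional module mentioned in the introduction.
\item For $u=A\in\h$ you get $\cK_A(o)=0$ and $\nabla_Y\cK_A(o)(X,X)=B(A,[Y,\phi(X)])$. This is nonzero for suitable $A$, because the $(X\circ X)_0$ span $J_0$ and $[\m,\m]\ne0$.
\item If one such $1$-jet were of decomposable form, all would be, since the set of such $A$ is an $\Ff$-submodule of the irreducible module $\h$.
\item An equivariant splitting would then give a nonzero $\Ff$-map $\h\to\m^*\otimes\h\cong 26\otimes52$. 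No such map exists, because $26\otimes52$ contains no copy of $52$.
\end{itemize}

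So the lower-order jets must be handled using the hypothesis, not unconditionally. One way is to split $\cK=\cK^++\cK^-$ by parity under the geodesic symmetry $s_o$.
\begin{itemize}
\item $\cK^+$ has zero $1$-jet. Subtracting $\cK_Q$ with $Q\in S^2\m$ puts it in $F_2$, and the hypothesis applies.
\item For $Y\in\m$, $Y\cdot\cK^-$ is even, hence decomposable. So the class of $\cK^-$ modulo decomposables is killed by $\m$, and therefore by $[\m,\m]=\h$.
\item By complete reducibility that class is represented by a $G$-invariant quadratic Killing tensor, i.e.\ a multiple of the metric, which is decomposable.
\end{itemize}

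\textbf{Step 3 is left open.} The ``only if'' direction needs exactly this: every $K\in\sT(\m)$ must be the top slot of a genuine Killing tensor in $F_2$. This is the real content of \cite[Theorem~3]{MN2}, and your suggested route does not reduce it. $\Span(X,P)$ is in general not a Lie triple system, so $\exp(\Span(X,P))$ is not totally geodesic. The Lie triple system generated by $X$ and $P$ is in general much larger than $\Span(X,P)$, so restricting to it gains nothing. ``Extension by analyticity'' presupposes that the locally defined tensor is already Killing, which is precisely what must be shown.

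Step~1, that top slots of tensors in $F_2$ satisfy \eqref{eq:K21} and \eqref{eq:K22}, is also only asserted, not derived. Note that the ``if'' direction depends on Step~1, not on Step~3 as you wrote.
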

		
It is easy to see that the tensors $K$ defined by~\eqref{eq:K4} indeed satisfy~\eqref{eq:K2c}, \eqref{eq:K21} and~\eqref{eq:K22} using the fact that for a symmetric space, $R(X,Y)Z = -[[X,Y],Z]$ and the Jacobi identity.

We will also need the following fact which for the compact spaces is proved in~\cite[Lemma~3]{MNN}, and for the noncompact ones easily follows by duality.
	\begin{lemma}[{}] \label{l:threecommute}
		Let $M$ be an irreducible globally symmetric space. Then for any $K \in \sT(\m)$, we have $K(X_1,X_2,X_3,X_4) = 0$ provided $[X_1,X_3]=[X_2,X_3] =0$.
	\end{lemma}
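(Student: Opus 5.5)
Before anything else, I would reduce to a configuration inside a single Cartan subspace. The expression $K(X_1,X_2,X_3,X_4)$ is symmetric and bilinear in $(X_1,X_2)$, and the hypothesis $[X_1,X_3]=[X_2,X_3]=0$ is preserved under linear combinations of $X_1,X_2$. So by polarisation it is enough to show $K(X,X,Z,P)=0$ for all $P\in\m$ whenever $[X,Z]=0$. Since $X$ and $Z$ commute, they lie in a common Cartan subspace $\ag\subset\m$. I would then decompose $\m=\ag\oplus\bigoplus_{\la}\m_\la$ into the restricted root spaces of $\ag$. Here $\m_\la$ consists of the $W$ with $(\operatorname{ad}_H)^2W=\la(H)^2W$ for $H\in\ag$, and the centraliser of $\ag$ in $\m$ is $\ag$ itself. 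By linearity in $P$, the claim splits into two cases: $P=W\in\m_\la$, and $P\in\ag$.

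To get equations, I would linearise the identities \eqref{eq:K21} and \eqref{eq:K22} along curves. Substitute $X\mapsto H+sU$ and $P\mapsto H'+tW$, with $H,H'\in\ag$ and $U,W$ root vectors, expand as polynomials in $s,t$, and read off individual coefficients. For $H,H'\in\ag$ we have $R(H,H')=0$. Also $R(H,W)H'=-[[H,W],H']$ is, up to sign, the $\ag$-action: it maps $\m_\la$ into the dual space $\h_\la$ and then back to $\m_\la$, with eigenvalue $\pm\la(H)\la(H')$. Each coefficient of the expansion then becomes a linear relation among values of $K$ with arguments from $\ag$ and root spaces, weighted by polynomials in $\la(H),\la(H')$. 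The cyclic identity in \eqref{eq:K2c}, applied with three arguments in $\ag$, supplies further relations.

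The case $P=W\in\m_\la$ comes first. Here I would aim for an identity of the form $c(\la)\,K(H,H,H',W)=0$, valid for generic $H,H'\in\ag$, where $c$ is a nonzero polynomial in $\la(H),\la(H')$. Density and continuity then give $K(X,X,Z,W)=0$. The second case, $K$ restricted to $\ag$ itself, is the step I expect to be the main obstacle. On the flat $\ag$ the curvature vanishes, so \eqref{eq:K21} and \eqref{eq:K22} say nothing about $K|_\ag$ directly. The only information has to come from the mixed coefficients, for instance the coefficient of $t^2$ in \eqref{eq:K21} with $P=H'+tW$. That coefficient couples $K(H,H,H',H')$-type terms to terms like $K(H,H,W,R(H,W)H')$ and $K(W,W,H,R(W,H)H)$. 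The plan is to combine these with the vanishing already obtained for root vectors, and with the irreducibility of $M$, which guarantees that the roots span $\ag^*$. The aim is to show that the symmetric form $K|_\ag$, which satisfies the cyclic identity, is annihilated by every root direction and therefore vanishes.

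The argument above is for the compact case. For noncompact $M$ I would pass to the compact dual via $\m\mapsto i\m$, which changes the sign of the curvature. Equations \eqref{eq:K2c}--\eqref{eq:K22} are homogeneous under this change: both sides of \eqref{eq:K21} flip sign together, and both sides of \eqref{eq:K22} are quadratic in $R$. So $\sT(\m)$ is preserved, and the statement transfers.
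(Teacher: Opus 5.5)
Your preliminary reductions are fine. Polarising in $(X_1,X_2)$ reduces the lemma to $K(H,H,H',P)=0$ for $H,H'$ in a Cartan subspace $\ag$ and arbitrary $P\in\m$. Your duality paragraph is correct and is exactly how the paper handles noncompact spaces; the paper does not reprove the lemma, but quotes [MNN, Lemma~3] for the compact case.

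The gap is that the compact case itself is not proved. Both of your cases are only announced, and the mechanism you describe does not produce what you expect. Take $X=H$, $P=H'+tW$ with $W\in\m_\la$, normalised so that $R(W,H)H=\la(H)^2W$. Then $R(H,W)H'=-\la(H)\la(H')W$, and the $t$-coefficient of \eqref{eq:K21} reads $\la(H)\bigl(\la(H')K(H,H,H',W)+\la(H)K(H',H',H,W)\bigr)=0$. The only constraint that \eqref{eq:K2c} imposes on $q(H,H')=K(H,H,H',W)$ is $q(H,H)=0$. The general solution of these two conditions is $q(H,H')=\la(H)\,\ell(H,H')$, with $\ell$ an arbitrary skew-symmetric bilinear form on $\ag$. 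So once $\rk M\ge 2$, no relation $c(\la)K(H,H,H',W)=0$ with $c\ne 0$ comes out of this coefficient.

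Higher coefficients do not separate your two cases either; they tie them to components of $K$ that the lemma does not control. The $t^2$-coefficient of \eqref{eq:K21} is $-\la(H)\la(H')K(H,H,W,W)+K(H,H,H',R(H,W)W)=2\la(H)^2K(H',W,H,W)$. Here $R(H,W)W$ has $\ag$-component $\la(H)\|W\|^2H_\la$ (with $\<H_\la,\cdot\>=\la$) and possibly a component in $\m_{2\la}$. So this one equation contains a $K|_\ag$ term, a root-vector term of your first case, and the two-root-vector components $K(H,H,W,W)$ and $K(H',W,H,W)$. These are nonzero in general. For the decomposable tensor \eqref{eq:K4} with $S=\mathrm{id}$ one gets $K(H,H,W,W)=2\la(H)^2\|W\|^2$ and $K(H',W,H,W)=-\la(H)\la(H')\|W\|^2$. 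The equation holds there because these two terms balance while $K(H,H,H',R(H,W)W)=0$.

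Consequently, combining the mixed coefficients with the vanishing for root vectors cannot by itself isolate $K|_\ag$. Your claim that $K|_\ag$ is annihilated by every root direction is not derived from anything. What is missing is an argument that controls all these components at once. That could be a complete analysis of the coefficient system of \eqref{eq:K21}--\eqref{eq:K22}, or some genuinely different input; the proposal provides neither.
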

Note that by the algebraic symmetries~\eqref{eq:K2c}, the same claim is true for any quadruple $(X_1,X_2,X_3,X_4)$ in which one of the elements from either the first or the second pair commutes with both elements of the other pair.

\subsection{Scheme of the proofs}
\label{ss:scheme}

The proofs of Theorems~\ref{t:Gpq} and~\ref{t:SLSO} follow similar scheme (which is different to the proof in~\cite{MNN} based on the induction by dimension). Suppose we are given a tensor $K \in \sT(\m)$.

First, starting from Lemma~\ref{l:threecommute} we show that if a trilinear functional $F: \m \times \m \times \m \to \br$ is symmetric by the last two arguments and is such that $F(X_1,X_3,X_4) = 0$ when $[X_1, X_3] = [X_1, X_4] = 0$, then there exists a linear map $\Phi: \m \to \h$ such that $F(X_1,X_3,X_4) = \<\Phi X_4, [X_1, X_3]\> + \<\Phi X_3, [X_1, X_4]\>$. We prove this in Proposition~\ref{p:FXYZ} and in Proposition~\ref{p:SLFXYZ}. It follows that for any $K \in \sT(\m)$, there exists a bilinear map $\Psi: \m \times \m \to \h$ such that $K(X_1,X_2,X_3,X_4) = \<\Psi(X_2,X_3), [X_1,X_4]\> + \<\Psi(X_2,X_4), [X_1,X_3]\>$. Using the symmetry of $K$ by the first two arguments, we then establish that $K$ has form~\eqref{eq:K4}, as required.

\section{Grassmannians and the Proof of Theorem~\ref{t:Gpq}}
\label{s:proofGpq}

\subsection{The real Grassmannians \texorpdfstring{$G(p,p+q) = \SO(p+q)/\SO(p) \times \SO(q)$}{G(p,p+q) = SO(p+q)/SO(p) \unichar{"00D7} SO(q)}}
\label{ss:Gpq}

We can assume that $p, q \ge 2$ and $p+q \ge 5$, as the claim for the round sphere and for the product of round spheres follows by the results from the Introduction.

We introduce the inner product on the Lie algebra $\g = \so(p+q)$ by $\<A_1,A_2\> = -\frac12 \Tr (A_1A_2)$, for $A_1,A_2 \in \g$. The summands of the Cartan decomposition $\g = \h \oplus \m$ are orthogonal and we have an orthogonal decomposition $\h = \so(p) \oplus \so(q)$. Moreover, $\<[A,X],Y\> = \<A,[X,Y]\>$, for all $A \in \h$ and $X, Y \in \m$.

The inner product on $\br^{p+q}$ will be also denoted $\ip$ (the meaning will be always clear from the context). For $a, b \in \br^{p+q}$ we denote $a \wedge b = ab^t - ba^t \in \so(p+q)$. Elements of $\g$ (and of $\m$) of this form are called \emph{simple}. We write $x, y, z, \dots$ for elements of $\br^p$, and $u,v, w, \dots$ for elements of $\br^q$. We have $\m = \br^p \wedge \br^q = \Span(x \wedge u \, | \, x \in \br^p, \, u \in \br^q)$ (where, to avoid ambiguity, we treat all vectors and subspaces as lying in $\br^{p+q}$).

For simple elements $X=x \wedge u, \, Y = y \wedge v \in \m$, where $x,y \in \br^p, \, u,v \in \br^q$, and for $A = A_1 \oplus A_2 \in \so(p) \oplus \so(q) = \h$ we have
\begin{gather*}
  \<X,Y\> = \<x,y\>\<u,v\>, \qquad [X,Y] = \<u,v\> y \wedge x + \<x,y\> v \wedge u \in \h, \\
  \<A, [X, Y]\> = \<u,v\> \<A_1 x, y\> + \<x,y\> \<A_2 u, v\>, \qquad [A,X] = (A_1x) \wedge u + x \wedge (A_2u) \in \m.
\end{gather*}
In particular, $[x \wedge u,y \wedge v] = 0$ if and only if either $x \perp y$ and $u \perp v$, or $x \wedge u$ and $y \wedge v$ are linearly dependent.

\begin{lemma} \label{l:rk1all}
  Suppose a polynomial $\phi(X, Y), \; X, Y \in \m$, is linear in $X$ and has the property that $\phi(X,Y) = 0$ whenever $[X,Y] = 0$ and $X$ is simple. Then $\phi(X,Y) = 0$ for all $X,Y \in \m$ with $[X,Y] = 0$.
\end{lemma}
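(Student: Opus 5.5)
The idea is to use the structure of Cartan subspaces: every commuting pair $X,Y$ in $\m$ lies in a Cartan subspace, and every Cartan subspace of $\m$ has a basis of simple elements. Linearity of $\phi$ in $X$ then reduces the claim to the simple case, which is the hypothesis.

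First, let $X,Y \in \m$ with $[X,Y]=0$. Then $\Span(X,Y)$ is an abelian subspace of $\m$. Take a maximal abelian subspace of $\m$ containing it; this is a Cartan subspace $\ag'$ with $X,Y \in \ag'$.

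Second, I describe a standard Cartan subspace explicitly. Let $r=\min(p,q)$, and choose orthonormal vectors $e_1,\dots,e_r \in \br^p$ and $f_1,\dots,f_r \in \br^q$. Put $\ag=\Span(e_i\wedge f_i \mid 1\le i\le r)$. From the bracket formula in \S\ref{ss:Gpq}, $[e_i\wedge f_i, e_j\wedge f_j]=0$ for $i\ne j$, since $e_i\perp e_j$ and $f_i\perp f_j$. So $\ag$ is abelian of dimension $r$, which is the rank of $G(p,p+q)$, and hence $\ag$ is a Cartan subspace. One can also see maximality directly: any $Z\in\m$ commuting with all $e_i\wedge f_i$ must lie in $\ag$, by a short computation with the same formula.

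Since all Cartan subspaces are conjugate under $H=\SO(p)\times\SO(q)$, we have $\ag'=h\ag$ for some $h\in H$. The action $(h_1,h_2)\cdot(x\wedge u)=(h_1x)\wedge(h_2u)$ sends simple elements to simple elements. Hence $\ag'$ is spanned by the pairwise commuting simple elements $E_i=h(e_i\wedge f_i)$.

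Finally, write $X=\sum_i c_iE_i$. Each $E_i$ is simple, and $[E_i,Y]=0$ because both lie in the abelian subspace $\ag'$. By the hypothesis, $\phi(E_i,Y)=0$ for every $i$. By linearity in the first argument, $\phi(X,Y)=\sum_i c_i\,\phi(E_i,Y)=0$.

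The only step requiring any care is the justification that $\ag$ is indeed a Cartan subspace, i.e. that the rank equals $\min(p,q)$ or that maximality holds directly. Both are standard, and the rest is immediate.
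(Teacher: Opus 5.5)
Your proof is correct and takes essentially the same route as the paper, which uses the isotropy action (polar decomposition) to bring the commuting pair to the simultaneous form $X=\sum_i \la_i e_i\wedge f_i$, $Y=\sum_i \mu_i e_i\wedge f_i+Y'$, and then applies linearity in $X$ to the simple summands $e_i\wedge f_i$, each of which commutes with $Y$. Placing $X,Y$ in a common Cartan subspace and using $H$-conjugacy to the standard one is simply a cleaner justification of that simultaneous normal form, and in particular it covers the case of repeated singular values that the paper's one-line appeal to the polar decomposition passes over.
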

\begin{proof}
  Let $X, Y \in \m$ commute. Using the polar decomposition (or equivalently, acting by the isotropy group on $\m$) we can find subspaces $\br^m_1 \subset \br^p$ and $\br^m_2 \subset \br^q$ and orthonormal bases $e_1, \dots, e_m$ for $\br^m_1$ and $f_1, \dots, f_m$ for $\br^m_2$, such that $X = \sum_{i=1}^{m} \la_i e_i \wedge f_i$ and $Y = \sum_{i=1}^{m} \mu_i e_i \wedge f_i + Y'$, where $Y' \in (\br^m_1)^\perp \wedge (\br^m_2)^\perp$, for some $\la_i \ne 0$ and some $\mu_i \in \br$. But then for every $i=1, \dots, m$, we have $[e_i \wedge f_i, Y] = 0$, and the claim follows from linearity of $\phi$ by $X$.
\end{proof}

\subsection{Proof of Theorem~\ref{t:Gpq}}
\label{ss:proofGpq}

We adopt the setup and notation of Section~\ref{s:prel} and Subsection~\ref{ss:Gpq}. We start with the following proposition (the first assertion should be well known in general, but we provide a short, explicit proof in our case for the sake of completeness).

\begin{proposition} \label{p:FXYZ}
{\ }
  \begin{enumerate}[label=\emph{(\alph*)},ref=\alph*]
  \item \label{it:F2}
  Suppose $F: \m \times \m \to \br$ is a bilinear map such that $F(X,Y) = 0$ when $[X,Y] = 0$. Then there exists a linear map $f: \h \to \br$ such that $F(X,Y) = f([X,Y])$.

  \item \label{it:F3}
  Suppose $F: \m \times \m \times \m \to \br$ is a trilinear map such that $F(X,Y,Z) = F(X,Z,Y)$, for all $X, Y, Z \in \m$, and $F(X,Y,Z) = 0$, for all $X, Y, Z \in \m$ such that $[X, Y] = [X, Z] = 0$. Then there exists a linear map $\Phi: \m \to \h$ such that $F(X,Y,Z) = \<\Phi Z, [X, Y]\> + \<\Phi Y, [X, Z]\>$.
\end{enumerate}
\end{proposition}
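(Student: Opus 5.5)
For part~(\ref{it:F2}) I would argue by linear algebra in $\m \otimes \m$. The map $X \otimes Y \mapsto [X,Y]$ is a linear surjection $\m\otimes\m \to \h$; indeed $[\m,\m]=\h$ for an irreducible symmetric space, and here it can be seen directly since $[x\wedge u, y\wedge u] = \|u\|^2\, y\wedge x$ and similarly in $\so(q)$. The claim is that $F$, viewed as a functional on $\m \otimes \m$, vanishes on the kernel $\mathcal{N}$ of this map. Then $F$ factors through $\h$, giving $f$.

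So the work is to show $\mathcal{N}$ is spanned by tensors $X\otimes Y$ with $[X,Y]=0$. In the basis $e_i\wedge f_a$ of $\m$ (with $e_i$, $f_a$ orthonormal bases of $\br^p$, $\br^q$), put $E_{ia}=e_i\wedge f_a$. Then $[E_{ia},E_{jb}]=\delta_{ab}\, e_j\wedge e_i+\delta_{ij}\, f_b\wedge f_a$. This vanishes unless $a=b$ or $i=j$, and the element $E_{ia}\otimes E_{ia}$ has zero bracket. Moreover $E_{ia}\otimes E_{jb}+E_{jb}\otimes E_{ia}$ lies in $\mathcal{N}$, because the bracket is antisymmetric. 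I would check that the symmetric tensors in $\mathcal{N}$ are spanned by commuting pairs using simple elements: $F(X,X)=0$ for every $X$ gives antisymmetry of $F$. For antisymmetric tensors, the kernel consists of
\begin{itemize}
\item[] (i) the pairs $E_{ia}\wedge E_{jb}$ with $i\ne j$, $a\ne b$, which are commuting pairs;
\item[] (ii) the relations $E_{ia}\wedge E_{ja} - E_{ib}\wedge E_{jb}$, which map to $e_j\wedge e_i$ from both terms.
\end{itemize}
The relations in (ii) come from $[(e_i+e_j)\wedge(f_a+f_b)\cdots]$-type commuting pairs. Concretely, $X=e_i\wedge f_a + e_j\wedge f_b$ and $Y=e_j\wedge f_a - e_i\wedge f_b$ commute, whatever the signs turn out to be. Expanding $F(X,Y)=0$ and using (i) gives exactly the needed relation. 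A dimension count then confirms that these span $\mathcal{N}\cap\Lambda^2$.

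For part~(\ref{it:F3}), I would fix $X$ and apply (\ref{it:F2})-style reasoning in the pair $(Y,Z)$. A cleaner route is to first polarise. Set $G(X,Y)=F(X,Y,Y)$, which is linear in $X$ and quadratic in $Y$ and vanishes when $[X,Y]=0$. The goal is to write $F(X,Y,Z)=\<\Phi Z,[X,Y]\>+\<\Phi Y,[X,Z]\>$, which means finding $\Phi$ with $G(X,Y)=2\<\Phi Y,[X,Y]\>$.

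The steps are:
\begin{enumerate}
\item For fixed simple $Y$, the bilinear map $(X,Z)\mapsto F(X,Y,Z)$ vanishes whenever $X$ commutes with both $Y$ and $Z$. I would use this with explicit basis computations, as in (\ref{it:F2}), to express $F(X,E_{jb},Z)$ on the basis.
\item Read off candidate components $\<\Phi E_{jb},\, e_k\wedge e_j\>$ and $\<\Phi E_{jb},\, f_c\wedge f_b\>$ from the values $F(E_{kb},E_{jb},E_{jb})$ and $F(E_{jc},E_{jb},E_{jb})$. This is forced, since $[E_{kb},E_{jb}]=e_j\wedge e_k$.
\item Verify the remaining basis values $F(E_{ia},E_{jb},E_{kc})$ using the vanishing hypotheses, applied to commuting triples built from sums such as the $X,Y$ above. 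Lemma~\ref{l:rk1all} allows restricting the hypothesis checks to simple $X$.
\end{enumerate}

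The main obstacle is step 3. It is a consistency check: the components of $\Phi E_{jb}$ along $e_k\wedge e_i$ with $i,k\ne j$ are not determined by the diagonal values, so they must be chosen compatibly. I expect this to require careful case analysis by the coincidence pattern of the indices $i,j,k$ and $a,b,c$, and to need the assumptions $p,q\ge2$ and $p+q\ge5$.
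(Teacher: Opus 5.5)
\textbf{Part (b) has a genuine gap.} Your steps 1--3 are a plan for a brute-force basis verification. Step 3, which you yourself flag as the main obstacle, is the entire content of the statement. The diagonal values $F(X,E_{jb},E_{jb})=2\<\Phi E_{jb},[X,E_{jb}]\>$ only see the components of $\Phi E_{jb}$ along $\Span(e_j\wedge e_i,\ f_b\wedge f_a)$. The off-diagonal values $F(X,E_{jb},E_{kc})$ involve the remaining components of $\Phi E_{jb}$ and $\Phi E_{kc}$. So you face an overdetermined linear system, and nothing in the proposal explains why the hypothesis makes it consistent.

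Your appeal to Lemma~\ref{l:rk1all} does not help. The hypothesis of (b) is already available for all commuting triples. That lemma concerns pairs and serves to \emph{extend} vanishing from simple $X$ to all $X$, not to produce it.

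The idea you are missing is the elementary ideal-membership fact that the paper proves first: a trilinear form $f(x,y,z)$ on $\br^n$ vanishing whenever $\<x,y\>=\<x,z\>=0$ equals $\<a,z\>\<x,y\>+\<b,y\>\<x,z\>$ for some $a,b\in\br^n$. The paper then proceeds as follows.
\begin{itemize}
\item Apply the fact to $F(x\wedge u,y\wedge v,z\wedge w)$, first in $(x,y,z)$ for $u\perp v,w$, and then in $(u,v,w)$. This gives $F=T_1(z,u,v,w)\<x,y\>+T_2(y,u,v,w)\<x,z\>+T_3(w,x,y,z)\<u,v\>+T_4(v,x,y,z)\<u,w\>$.
\item Vanishing of $F(x\wedge u,x\wedge u,z\wedge w)$ for $x\perp z$, $u\perp w$, and of its analogue with the last two slots exchanged, pins down $T_1(z,u,u,w)$, $T_3(w,x,x,z)$, $T_2(y,u,v,u)$ and $T_4(v,x,y,x)$ up to linear maps $A_i,B_i$.
\item Vanishing of $F(X,X,X)$ gives $A_1+A_2+A_3^t+A_4^t=0$.
\item After polarising, the undetermined parts of the $T_i$ are $\so(p)$- or $\so(q)$-valued. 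Symmetrising in $(Y,Z)$ kills the $A$-terms and exhibits $\Phi$ directly.
\end{itemize}
This replaces your case analysis over coincidences of the indices entirely.

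\textbf{Part (a) is fine in substance.} Showing that the kernel of $\Lambda^2\m\to\h$ is spanned by commuting pairs is a clean alternative to the paper's encoding of $F$ by operators $\Phi(u,v)$ on $\br^p$ that are scalar for $u\perp v$. However, the details as written are wrong in three places.
\begin{itemize}
\item Your list of kernel elements omits the $\so(q)$-type relations $E_{ia}\wedge E_{ib}-E_{ja}\wedge E_{jb}$, so the dimension count does not close.
\item Your pair $X=E_{ia}+E_{jb}$, $Y=E_{ja}-E_{ib}$ does not commute: $[X,Y]=2\,e_j\wedge e_i+2\,f_a\wedge f_b$.
\item With $+E_{ib}$ the pair does commute, but then $F(X,Y)=0$ gives the \emph{sum} of an $\so(p)$-type and an $\so(q)$-type relation, and no type (i) terms appear at all.
\end{itemize}
The pairs that do what you intend are $e_i\wedge(f_a+f_b)$, $e_j\wedge(f_a-f_b)$ and $(e_i+e_j)\wedge f_a$, $(e_i-e_j)\wedge f_b$. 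Each pair commutes, and after discarding its two type (i) terms it gives $F(E_{ia},E_{ja})=F(E_{ib},E_{jb})$ and $F(E_{ia},E_{ib})=F(E_{ja},E_{jb})$ respectively.
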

\begin{proof}
  \eqref{it:F2} Take $X = x \wedge u,\, Y = y \wedge v$, with $x, y \in \br^p, \, u,v \in \br^q$. The condition $[X, Y] = 0$ is equivalent to the fact that either $\<x,y\> = \<u,v\> = 0$ or $X$ and $Y$ are linearly dependent. For $u, v \in \br^q$, define the linear map $\Phi(u,v)$ on $\br^p$ by $\<\Phi(u,v)x,y\> = F(x \wedge u, y \wedge v)$. Then for $u \perp v$, the vector $\Phi(u,v)x$ must be a multiple of $x$, for all $x \in \br^p$. Choosing an orthonormal basis $f_\a$ for $\br^q$ and taking $u = a f_\a + b f_\b,\, v = -b f_\a + a f_\b$ for some $a,b \in \br$ and $\a \ne \b$ we obtain that $\Phi(f_\a,f_\b) = C_{\a\b} I_p$ and $\Phi(f_\a,f_\a) - \Phi(f_\b,f_\b) = C'_{\a\b} I_p$ for some $C_{\a\b}, C'_{\a\b} \in \br$. Then $\Phi(u,v) = \<Lu,v\> I_p + \<u,v\> A$, for some linear maps $L$ on $\br^q$ and $A$ on $\br^p$, and so $F(x \wedge u, y \wedge v) = \<Lu,v\> \<x,y\> + \<u,v\> \<Ax,y\>$. Moreover, we have $0 = F(x \wedge u, x \wedge u) = \<Lu,u\> \|x\|^2 + \|u\|^2 \<Ax,x\>$, for all $x \in \br^p, \, u \in \br^q$. It follows that $L = \la I_q + N$ and $A = -\la I_p + B$, for some $\la \in \br$ and $N \in \so(q), \, B \in \so(p)$. Taking $H = B \oplus N \in \so(p) \oplus \so(q) = \h$ we obtain that $F(X,Y) = \<H,[X,Y]\>$, for all simple $X, Y \in \m$. As simple elements span $\m$, the claim follows by bilinearity.

  \eqref{it:F3} 
  On several occasions throughout the proof, we will use the following simple fact. Suppose a trilinear form $f(x,y,z), \; x,y,z \in \br^n$ has the property that $f(x,y,z) = 0$ whenever $\<x,y\> = \<x,z\> = 0$. Then $f(x,y,z) = \<a,z\>\<x,y\> + \<b,y\>\<x,z\>$, for some $a,b \in \br^n$. Indeed, if $n=1$, there is nothing to prove. Otherwise, choosing an orthonormal basis for $\br^n$ we can write $f(x,y,z)= \sum_{ijk} f_{ijk} x_i y_j z_k$. For $i_1 \ne i_2$, take $y_{i_1}=z_{i_1}=-x_{i_2}, \, y_{i_2}=z_{i_2}=x_{i_1}$, and $x_i = 0$ when $i \ne i_1, i_2$. Then $\<x,y\> = \<x,z\> = 0$, and so we get $f(x,y,z)= 0$. Collecting the terms we find that $f_{ijk} = 0$ when $i \ne j,k$, that $f_{iij}=f_{iik}$ and $f_{iji}=f_{iki}$ when $i \ne j \ne k \ne i$, and that $f_{iii}=f_{iji}+ f_{iij}$ when $i \ne j$. Denoting $a_i = f_{jji}, \, b_i = f_{jij}$ for $j \ne i$, we obtain $f_{iii} = a_i + b_i$, and so $f(x,y,z) = \sum_{ijk} f_{ijk} x_i y_j z_k = \sum_{ij} (a_ix_jy_jz_i+b_ix_jy_iz_j) = \<a,z\>\<x,y\> + \<b,y\>\<x,z\>$, as claimed.

  
  Now given a map $F$ as in the assumption, take the elements $X, Y$ and $Z$ to be simple, so that $X = x \wedge u,\, Y = y \wedge v$ and $Z=z \wedge w$, with $x, y, z \in \br^p$ and $u,v,w \in \br^q$. Suppose $\<u,v\>=\<u,w\>=0$. Then $F(x \wedge u, y \wedge v, z \wedge w) = 0$ when $x \perp y, z$, and so from the argument in the previous paragraph, $F(x \wedge u, y \wedge v, z \wedge w) \equiv \<a(u,v,w),z\>\<x,y\> + \<b(u,v,w),y\>\<x,z\> \mod \cI$, for some trilinear maps $a, b: (\br^q)^3 \to \br^p$, where $\cI$ is the ideal generated by $\<u,v\>$ and $\<u,w\>$, and so $F(x \wedge u, y \wedge v, z \wedge w)$ belongs to the ideal generated by $\<x,y\>, \<x,z\>, \<u,v\>$ and $\<u,w\>$. In other words,
  \begin{multline}\label{eq:T14}
    F(x \wedge u, y \wedge v, z \wedge w) = T_1(z,u,v,w) \<x,y\> + T_2(y,u,v,w) \<x,z\> \\
    + T_3(w,x,y,z) \<u,v\> + T_4(v,x,y,z) \<u,w\>,
  \end{multline}
  for some quadrilinear forms $T_1,T_2,T_3$ and $T_4$.

  From the assumption we know that $F(x \wedge u, x \wedge u, z \wedge w) = 0$ when $\<x,z\>=\<u,w\>=0$, and so~\eqref{eq:T14} gives
  \begin{equation}\label{eq:T13}
  T_1(z,u,u,w) \|x\|^2 + T_3(w,x,x,z) \|u\|^2 =0, \quad \text{when } \<x,z\>=\<u,w\>=0.
  \end{equation}
  Take $z = e_i$ in~\eqref{eq:T13} and choose and fix an arbitrary nonzero $x \perp e_i$. We obtain that the polynomial $P(u,w)=T_1(e_i,u,u,w) + \|x\|^{-2}T_3(w,x,x,e_i) \|u\|^2$ vanishes when $w \perp u$. Choosing a basis for $\br^q$ we can write $P(u,w) = \sum_\a P_\a(u)w_\a$, where $P_\a(u)$ are quadratic forms on $\br^q$. It follows that the vector $(P_1(u), \dots, P_q(u))^t$ is proportional to $u$, for all $u \in \br^q$, which implies that $P_\a(u) u_\b = P_\b(u) u_\a$, and hence there exists a vector $a_i \in \br^q$ such that $P_\a(u) = \<a_i,u\> u_\a$. Then $P(u,w) = \<a_i,u\> \<u,w\>$, and so $T_1(e_i,u,u,w)= \<b_i,w\>\|u\|^2+\<a_i,u\> \<u,w\>$, where $b_i \in \br^q$ is defined by $\<b_i,w\> = -\|x\|^{-2}T_3(w,x,x,e_i)$ (note that by~\eqref{eq:T14}, the right-hand side is independent of the choice of $x \perp e_i$). Hence there exist linear maps $A_1, B_1: \br^p \to \br^q$ such that $T_1(z,u,u,w)= \<A_1z,u\> \<u,w\>+\<B_1z,w\>\|u\|^2$. By symmetry of~\eqref{eq:T13}, we similarly obtain that there exist linear maps $A_3, B_3: \br^q \to \br^p$ such that $T_3(w,x,x,z)= \<A_3w,x\> \<x,z\>+\<B_3w,z\>\|x\|^2$. Moreover, from~\eqref{eq:T13} we find that $B_3= -B_1^t$, so that $T_3(w,x,x,z)= \<A_3w,x\> \<x,z\>-\<B_1z,w\>\|x\|^2$.

  By a similar argument, the fact that $F(x \wedge u, y \wedge v, x \wedge u) = 0$ when $\<x,y\>=\<u,v\>=0$ implies the existence of linear maps $A_2, B_2: \br^p \to \br^q$ and $A_4: \br^q \to \br^p$ such that $T_2(y,u,v,u)= \<A_2y,u\> \<u,v\>+\<B_2y,v\>\|u\|^2$ and $T_4(v,x,y,x)= \<A_4v,x\> \<x,y\>-\<B_2y,v\>\|x\|^2$.

  Furthermore, we must have $F(x \wedge u, x \wedge u, x \wedge u) = 0$, for all $x \in \br^p$ and $u \in \br^q$, and so~\eqref{eq:T14} gives $(T_1(x,u,u,u) + T_2(x,u,u,u)) \|x\|^2 + (T_3(u,x,x,x) + T_4(u,x,x,x)) \|u\|^2 = 0$. Substituting the above expressions for $T_1,T_2,T_3$ and $T_4$, we obtain $\<A_1x, u\> + \<A_2 x, u\> + \<A_3u, x\> + \<A_4u, x\> = 0$, or equivalently,
  \begin{equation}\label{eq:A14}
    A_1+A_2+A_3^t+A_4^t=0.
  \end{equation}

  We now polarise the above expressions for $T_1,T_2,T_3$ and $T_4$ to obtain
  \begin{align*}
     T_1(z,u,v,w) & = \tfrac12(\<A_1z,u\> \<v,w\> + \<A_1z,v\> \<u,w\>) + \<K_1(z,w)u,v\> + \<B_1z,w\>\<u,v\>, \\
     T_2(y,u,v,w) & = \tfrac12(\<A_2y,u\> \<w,v\> + \<A_2y,w\> \<u,v\>) + \<K_2(y,v)u,w\> + \<B_2y,v\>\<u,w\>, \\
     T_3(w,x,y,z) & = \tfrac12(\<A_3w,x\> \<y,z\> + \<A_3w,y\> \<x,z\>) + \<K_3(z,w)x,y\> - \<B_1z,w\>\<x,y\>,\\
     T_4(v,x,y,z) & = \tfrac12(\<A_4v,x\> \<z,y\> + \<A_4v,z\> \<x,y\>) + \<K_4(v,y)x,z\>-\<B_2y,v\>\<x,z\>,
  \end{align*}
  for some bilinear maps $K_1, K_2: \br^p \times \br^q \to \so(q)$ and $K_3, K_4: \br^p \times \br^q \to \so(p)$. Substituting this into~\eqref{eq:T14} we obtain
  \begin{multline*}
    F(x \wedge u, y \wedge v, z \wedge w) = \<x,y\>\<u,w\>\<(A_1+A_4^t)z,v\> + \<x,z\>\<u,v\>\<(A_2+A_3^t)y,w\> + \\
    \<x,y\>\<L_1(z,w)u,v\> + \<x,z\>\<L_2(y,v)u,w\> + \<u,v\>\<L_3(z,w)x,y\> + \<u,w\>\<L_4(v,y)x,z\>,
  \end{multline*}
  where $L_1(z,w)=K_1(z,w)-\tfrac12 (A_1z) \wedge w, \, L_3(z,w)= K_3(z,w)-\tfrac12 (A_3w) \wedge z, \, L_2(y,v) = K_2(y,v) - \tfrac12 (A_2y) \wedge v$ and $L_4(v,y) = K_4(v,y) - \tfrac12 (A_4v) \wedge y$. Note that $L_1(z,w), L_2(y,v) \in \so(q)$ and $L_3(z,w),L_4(v,y) \in \so(p)$.

  Furthermore, by assumption we have $F(X,Y,Z) = F(X,Z,Y)$, and so $F(X,Y,Z) = \tfrac12 (F(X,Y,Z) + F(X,Z,Y))$. Substituting the above expression for $F(x \wedge u, y \wedge v, z \wedge w)$ and taking into account~\eqref{eq:A14} we obtain
  \begin{align*}
    F(x \wedge u, y \wedge v, z \wedge w) &= \<x,y\>\<N_1(z,w)u,v\> + \<x,z\>\<N_1(y,v)u,w\> \\
    &+ \<u,v\>\<N_2(z,w)x,y\>+ \<u,w\>\<N_2(v,y)x,z\> \\
    &= \<\Phi(z \wedge w),[x \wedge u, y \wedge v]\> + \<\Phi(y \wedge v),[x \wedge u, z \wedge w]\>,
  \end{align*}
  where $N_1=L_1+L_2, \, N_2 = L_3 + L_4$ and $\Phi(z \wedge w) = N_2(z,w) \oplus N_1(z,w), \Phi(y \wedge v) = N_2(y,v) \oplus N_1(y,v) \in \h$. Since $(z,w) \mapsto N_2(z,w)\oplus N_1(z,w)$ is bilinear and $\m \cong \br^p \otimes \br^q$ via $z\otimes w\mapsto z\wedge w$, it induces a unique linear map $\Phi:\m \to \h$. The claim now follows by trilinearity of $F$.
\end{proof}

Let $K \in \sT(\m)$. By Lemma~\ref{l:threecommute} and Proposition~\ref{p:FXYZ}, there exists a bilinear map $\Psi: \m \times \m \to \h$ such that
\begin{equation} \label{eq:KPsi}
  K(X_1,X_2,X_3,X_4) = \<\Psi(X_2,X_3), [X_1,X_4]\> + \<\Psi(X_2,X_4), [X_1,X_3]\>,
\end{equation}
for all $X_1,X_2,X_3,X_4 \in \m$. From the symmetries of $K$ we know that $K(X_1,X_2,X_3,X_4) = K(X_4,X_3,X_2,X_1)$. Substituting the expression on the right-hand side of~\eqref{eq:KPsi} and taking $[X_1,X_3] = 0$ in the resulting equation we obtain $\<\Psi(X_2,X_3)+\Psi(X_3,X_2), [X_1,X_4]\> + \<\Psi(X_3,X_1), [X_2,X_4]\>  = 0$, whenever $[X_1, X_3] = 0$, which gives
\begin{equation}\label{eq:Psi23}
  [\Psi(X_2,X_3)+\Psi(X_3,X_2), X_1] + [\Psi(X_3,X_1), X_2]  = 0, \quad \text{when } [X_1,X_3] = 0.
\end{equation}

Denote $\Psi(X_2,X_3)+\Psi(X_3,X_2) = A \oplus L$, where $A = A(X_2,X_3) \in \so(q)$ and $L = L(X_2,X_3) \in \so(p)$, and similarly $\Psi(X_3,X_1) = B \oplus N$, where $B = B(X_1,X_3) \in \so(q)$ and $N = N(X_1,X_3) \in \so(p)$. Then, taking $X_1 = x \wedge u$ and $X_2 = y \wedge v$, with $x, y \in \br^p, \, u,v \in \br^q$, in~\eqref{eq:Psi23} we obtain the following matrix equation:
\begin{equation}\label{eq:Psi23mat}
  (A(y \wedge v,X_3)u) x^t + u (L(y \wedge v,X_3)x)^t + (B(x \wedge u,X_3)v) y^t + v (N(x \wedge u,X_3)y)^t = 0,
\end{equation}
when $[x \wedge u, X_3] = 0$. Acting by both sides of~\eqref{eq:Psi23mat} on $x$ and assuming $x \perp y$ we get $\|x\|^2 A(y \wedge v,X_3)u + \<N(x \wedge u,X_3)y,x\> v = 0$. Taking the inner product with $u$ we obtain $\<N(x \wedge u,X_3)y,x\> \<v,u\> = 0$, and so $\<N(x \wedge u,X_3)y,x\> = 0$ when $x \perp y$ and $[x \wedge u, X_3] = 0$. As $N(x \wedge u,X_3)$ is skew-symmetric, we obtain $N(x \wedge u,X_3)x =0$, when $[x \wedge u, X_3] = 0$. Then the preceding equation $\|x\|^2 A(y \wedge v,X_3)u + \<N(x \wedge u,X_3)y,x\> v = 0$ implies that $A(y \wedge v,X_3)u = 0$ when $[x \wedge u, X_3] = 0$. Substituting this into~\eqref{eq:Psi23mat} and acting by both sides on $x$ we obtain $\<x, y\> B(x \wedge u,X_3)v = 0$, and so $B(x \wedge u,X_3) = 0$. Then from~\eqref{eq:Psi23mat} we get $u (L(y \wedge v,X_3)x)^t + v (N(x \wedge u,X_3)y)^t = 0$, and so taking $u, v \in \br^q$ linearly independent we find $L(y \wedge v,X_3)x = N(x \wedge u,X_3)y = 0$ for all $x, y \in \br^p, \, u,v \in \br^q$ and $X_3 \in \m$ such that $[x \wedge u, X_3] = 0$. It follows that $N(x \wedge u,X_3) = 0$. Hence $\Psi (X_3,X_1) = 0$, for all $X_1,X_3 \in \m$ such that $[X_1,X_3] = 0$ and $X_1$ is simple.

From Lemma~\ref{l:rk1all} we obtain that $\Psi (X_3,X_1) = 0$, for all $X_1,X_3 \in \m$ such that $[X_1,X_3] = 0$. Then by Proposition~\ref{p:FXYZ}\eqref{it:F2}, there exists a linear map $\psi: \h \to \h$ such that $\Psi(X,Y) = \psi([X,Y])$, for all $X, Y \in \m$. Then from equation~\eqref{eq:KPsi} we obtain $K(X_1,X_2,X_3,X_4) = \<\psi([X_2,X_3]), [X_1,X_4]\> + \<\psi([X_2,X_4]), [X_1,X_3]\>$. But now from the symmetries of $K$ we get $K(X_1,X_2,X_3,X_4) = \frac12(K(X_1,X_2,X_3,X_4) + K(X_2,X_1,X_3,X_4))$, and it follows that $K(X_1,X_2,X_3,X_4) = \<S([X_2,X_3]), [X_1,X_4]\> + \<S([X_2,X_4]), [X_1,X_3]\>$, where $S =\frac12(\psi + \psi^t)$ is a symmetric operator on $\h$, as required by~\eqref{eq:K4}.

This completes the proof of Theorem~\ref{t:Gpq}.

\section{Symmetric space \texorpdfstring{$\GL(n)/\SO(n)$}{GL(n)/SO(n)} and the Proof of Theorem~\ref{t:SLSO}}
\label{s:proofSLSO}

\subsection{Symmetric space \texorpdfstring{$\boldsymbol{\GL(n)/\SO(n)}$}{GL(n)/SO(n)}}
\label{ss:GLSO}

It will be easier to work with the reducible symmetric space $\GL(n)/\SO(n) = \br \times \SL(n)/\SO(n)$ with the product metric. Note that by~\cite[Theorem~1]{MN2} the algebra of Killing tensor fields on $\GL(n)/\SO(n)$ is the graded product of the algebras of Killing tensor fields on $\SL(n)/\SO(n)$ and on $\br$ (the latter being the algebra of real polynomials in one variable).

For the symmetric space $\GL(n)/\SO(n)$, we have $\g=\gl(n),\, \h=\so(n)$, and in the Cartan decomposition $\g=\h \oplus \m$, the subspace $\m$ is the space of $n \times n$ real symmetric matrices. We have $\m = \br I_n \oplus \m_0$, where $\m_0$ is the subspace of matrices from $\m$ with trace zero. If we can prove that any tensor $K \in \sT(\m)$ has the form~\eqref{eq:K4} for a symmetric operator $S: \h \to \h$, the claim of Theorem~\ref{t:SLSO} will be established. Indeed, given any tensor $K_0 \in \sT(\m_0)$, we define the tensor $K \in \sT(\m)$ by $K(I_n, \cdot,\cdot,\cdot) = 0$ and $K(X, Y, Z, P) = K_0(X, Y, Z, P)$, for $X, Y, Z, P \in \m_0$. As the curvature tensor of $\GL(n)/\SO(n)$ satisfies $R(I_n,X)Y = R(X,Y)I_n = 0$, for any $X, Y \in \m_0$, the tensor $K$ belongs to $\sT(\m)$, and hence has the form~\eqref{eq:K4}, as also does the original tensor $K_0$, if we restrict~\eqref{eq:K4} to $X, Y, Z, P \in \m_0$. Then the claim for $\SL(n)/\SO(n)$ follows from Proposition~\ref{p:topslot2}.

So for the rest of the proof, we will work with the symmetric space $\GL(n)/\SO(n)$ and with tensors $K \in \sT(\m)$. Note that $\GL(2)/\SO(2)$ is isometric to the product of the line and the hyperbolic plane, and so any Killing tensor field on it is decomposable by the results from the Introduction. For $n=3,4$, the fact that any tensor $K \in \sT(\m)$ has the form~\eqref{eq:K4} is proved by a direct computation: the space of tensors $K$ satisfying~\eqref{eq:K2c} has dimension $\frac{1}{12} N^2(N^2-1)$, where $N = \dim \m$~\cite[Remark~3]{MN2}, which gives $105$ for $n=3$ and $825$ for $n=4$. Then the linear system of equations for the components of $K$ given by~\eqref{eq:K21} and~\eqref{eq:K22} is easily handled by computer algebra.

For the rest of the proof we assume that $n \ge 5$.

We introduce the inner product on the Lie algebra $\g = \gl(n)$ by $\<A_1,A_2\> = \frac12 \Tr (A_1A_2^t)$, for $A_1,A_2 \in \g$. Then the summands of the Cartan decomposition $\g = \h \oplus \m$ are orthogonal; we denote $\ip$ the restriction of the above inner product $\ip$ on $\g$ to each of them.

For $x, y \in \br^n$, we define $x \odot y = xy^t + yx^t$. Then $\m = \br^n \cdot \br^n$, the space of $n \times n$ real symmetric matrices (and $\h = \so(n)$). We call an element $X \in \m$ \emph{simple}, if $\rk X \le 1$, that is if $X = \pm x \odot x$ for some $x \in \br^n$. Note that $\<L,[X,Y]\>=\<[L,Y],X\>$ for $L \in \h = \so(n)$ and $X,Y \in \m$. For simple elements $X=x \odot x, \, Y = y \odot y \in \m$, where $x,y \in \br^n$, and for $A \in \so(n) = \h$ we have
\begin{gather*}
  \<X,Y\> = 2 \<x,y\>^2, \qquad [X,Y] = 4\<x,y\> \, x \wedge y \in \h, \\
  \<A, [X, Y]\> = 4\<x,y\> \<A y, x\>, \qquad [A,X] = 2(Ax) \odot x \in \m.
\end{gather*}
In particular, $[x \odot x,y \odot y] = 0$ if and only if either $x \perp y$, or $x$ and $y$ are linearly dependent.

We will use the following fact.

\begin{lemma} \label{l:rk1sym}
  Suppose a polynomial $\phi(X, Y), \; X, Y \in \m$, is linear in $X$ and has the property that $\phi(X,Y) = 0$ whenever $[X,Y] = 0$ and $X$ is simple. Then $\phi(X,Y) = 0$ for all $X,Y \in \m$ with $[X,Y] = 0$.
\end{lemma}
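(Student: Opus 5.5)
The plan is to mirror the proof of Lemma~\ref{l:rk1all}, replacing the polar decomposition by the spectral theorem for commuting symmetric matrices. Let $X, Y \in \m$ with $[X,Y] = 0$. Two commuting real symmetric matrices are simultaneously orthogonally diagonalisable, so acting by the isotropy group $\SO(n)$ on $\m$ (equivalently, choosing a common orthonormal eigenbasis $e_1, \dots, e_n$ of $\br^n$) we can write
\begin{equation*}
  X = \sum_{i=1}^{n} \tfrac12 \la_i \, e_i \odot e_i, \qquad Y = \sum_{i=1}^{n} \tfrac12 \mu_i \, e_i \odot e_i,
\end{equation*}
for some $\la_i, \mu_i \in \br$ (recall that $e_i \odot e_i = 2 e_i e_i^t$).

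The next step is to check that each simple summand $e_i \odot e_i$ commutes with $Y$. By the formula $[x \odot x, y \odot y] = 4\<x,y\>\, x \wedge y$, we have $[e_i \odot e_i, e_j \odot e_j] = 4 \delta_{ij}\, e_i \wedge e_i = 0$ for all $i,j$. Hence $[e_i \odot e_i, Y] = 0$ for every $i$, and $e_i \odot e_i$ is simple. By hypothesis, $\phi(e_i \odot e_i, Y) = 0$ for each $i$. Since $\phi$ is linear in its first argument,
\begin{equation*}
  \phi(X,Y) = \sum_{i=1}^n \tfrac12 \la_i \, \phi(e_i \odot e_i, Y) = 0,
\end{equation*}
which is the claim. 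No invariance of $\phi$ under the isotropy group is needed, because the decomposition is carried out directly in the eigenbasis and never requires moving $\phi$.

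There is no real obstacle. The only point to verify is that "simple" in this section (rank at most one, of the form $\pm x \odot x$) covers the summands $e_i \odot e_i$, which it does. Also, we do not need to restrict to the nonzero $\la_i$ as in Lemma~\ref{l:rk1all}, since terms with $\la_i = 0$ contribute nothing anyway.
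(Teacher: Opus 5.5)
Your proof is correct and follows the paper's argument. The paper likewise writes $X$ as a combination of $e_i\odot e_i$ in an orthonormal eigenbasis, notes that each $e_i\odot e_i$ commutes with $Y$, and concludes by linearity of $\phi$ in $X$. The only cosmetic difference is that the paper keeps the part $Y'$ of $Y$ on the orthogonal complement of the support of $X$ undiagonalised, whereas you diagonalise $X$ and $Y$ simultaneously; this is equally valid.
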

\begin{proof}
  Suppose $X, Y \in \m$ commute. Using the canonical form, we can find a subspaces $\br^m \subset \br^n$ and an orthonormal basis $e_1, \dots, e_m$ for $\br^m$ such that $X = \sum_{i=1}^{m} \la_i e_i \odot e_i$ and $Y = \sum_{i=1}^{m} \mu_i e_i \odot e_i + Y'$, where $Y' \in (\br^m)^\perp \odot (\br^m)^\perp$, for some $\la_i \ne 0$ and some $\mu_i \in \br$. But then for every $i=1, \dots, m$, we have $[e_i \odot e_i, Y] = 0$, and the claim follows from linearity of $\phi$ by $X$.
\end{proof}

\subsection{Proof of Theorem~\ref{t:SLSO}}
\label{ss:proofSLSO}

The following lemma can be deduced from Lemma~\ref{l:threecommute} (in which the space is required to be irreducible), but it is easier to give a direct, independent proof in our case.

\begin{lemma} \label{l:SLK1340}
  If $K \in \sT(\m)$, then $K(X_1,X_2,X_3,X_4) = 0$ for all $X_1,X_2,X_3,X_4 \in \m$ such that $[X_1,X_3]=[X_1,X_4]=0$.
\end{lemma}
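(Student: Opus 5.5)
The plan is to reduce the statement to a single commuting pair, then to a simple first argument, and finally to deduce the vanishing from the linearised Killing condition~\eqref{eq:K21} together with the algebraic symmetries~\eqref{eq:K2c}.

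\emph{Reduction to $X_3=X_4$ and simple $X_1$.} Since $K$ is symmetric in its last two arguments, the map $(X_3,X_4)\mapsto K(X_1,X_2,X_3,X_4)$ is a symmetric bilinear form. The set of $Y\in\m$ commuting with $X_1$ is a linear subspace, so by polarisation in $(X_3,X_4)$ within that subspace it suffices to show $K(X_1,X_2,Y,Y)=0$ whenever $[X_1,Y]=0$. Fix $X_2$ and set $\phi(X,Y)=K(X,X_2,Y,Y)$. This is polynomial, and linear in $X$, so by Lemma~\ref{l:rk1sym} it suffices to treat simple $X_1$. After an orthogonal change of basis we may take $X_1=e\odot e$ with $\|e\|=1$. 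Then $Y=\mu\, e\odot e+Y'$ with $Y'\in e^\perp\odot e^\perp$. Expanding, we must show that the three quantities $K(e\odot e,X_2,e\odot e,e\odot e)$, $K(e\odot e,X_2,e\odot e,Y')$ and $K(e\odot e,X_2,Y',Y')$ vanish, for arbitrary $X_2\in\m$ and $Y'\in e^\perp\odot e^\perp$.

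\emph{Linearising~\eqref{eq:K21}.} For commuting $X,P$, replace $X$ by $X+tZ$ in~\eqref{eq:K21} and take the coefficient of $t$. Using $R(X,P)=0$ and $R(U,V)W=-[[U,V],W]$, this should give
\begin{equation*}
K(X,X,P,[[Z,P],P])=K(P,P,X,[[Z,P],X]),\qquad Z\in\m,\ [X,P]=0.
\end{equation*}
Similarly, replacing $P$ by $P+tZ$ yields a companion identity. A second-order expansion, and the analogous expansion of~\eqref{eq:K22}, give further relations if needed.

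\emph{Applying the identities.} Take $X=e\odot e$ and $P$ diagonal in an orthonormal basis $e=e_1,e_2,\dots,e_n$, so that $X,P$ lie in a common Cartan subspace. The operator $Z\mapsto[[Z,P],P]$ is diagonalisable, with eigenvalues $(p_i-p_j)^2$ on $e_i\odot e_j$. Choosing $P$ with distinct eigenvalues and varying it, its image therefore spans the off-diagonal symmetric matrices. The left side then controls $K(X,X,P,e_i\odot e_j)$ for $i\ne j$. The right side involves $[[Z,P],e\odot e]$, which is supported on entries in row and column $1$, so it relates only to arguments of the form $e_1\odot e_j$. Combining this with the cyclic identity in~\eqref{eq:K2c} lets us move $X_2$ out of the second slot. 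In this way I expect to show first that $K(e\odot e,e\odot e,\cdot,\cdot)$ vanishes on commuting diagonal pairs, and then, by polarising in the first pair through $X=e\odot e+s\,W$ with $W$ commuting with $P$, to extend this to an arbitrary second argument $X_2$.

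\emph{Main obstacle.} The hardest step is the last one: showing that the equations obtained this way are strong enough to kill all three terms for \emph{arbitrary} $X_2$, not only for $X_2$ in the commutant of $P$. My first attempt would be to decompose $X_2$ into $e_i\odot e_j$ components, treat $i,j\ne1$, exactly one of $i,j$ equal to $1$, and $i=j=1$ separately, and use the assumption $n\ge5$ to find auxiliary indices free of all entries involved. In each case the idea is to choose $P$ so that $X_2$ lies in the image of $\operatorname{ad}_P^2$, which makes the linearised identity applicable. If this does not close up, the fallback is to use the second-order terms of~\eqref{eq:K21} and~\eqref{eq:K22}.
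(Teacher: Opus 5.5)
Your opening reductions are correct and coincide with the paper's. You use symmetry in the last two slots, then Lemma~\ref{l:rk1sym} to make $X_1=e\odot e$ simple, then the splitting of the commutant of $e\odot e$ as $\br\, e\odot e\oplus e^\perp\odot e^\perp$.

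The gap is that the actual content of the lemma is never proved: the vanishing of $K(e\odot e,X_2,e\odot e,e\odot e)$, $K(e\odot e,X_2,e\odot e,Y')$ and $K(e\odot e,X_2,Y',Y')$ for \emph{arbitrary} $X_2$. It is only announced (``I expect to show'', ``if this does not close up, the fallback is\ldots''), and you flag it yourself as the main obstacle. It is also not clear that your identity can deliver it. Each instance of the linearised~\eqref{eq:K21} only relates one unknown value of $K$ to another; for example, it relates $K(e\odot e,e\odot e,Y',[[Z,Y'],Y'])$ to $K(Y',Y',e\odot e,[[Z,Y'],e\odot e])$. So vanishing would have to come from a combination of instances that you have not exhibited. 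The most direct attempt also fails: to reach $K(e\odot e,X_2,Y',Y')$ through $K(X,W,P,Q)$ with $P=Y'$, you would need $Q=Y'$. But $Y'\in\ker\mathrm{ad}_{Y'}$, which is orthogonal to $\mathrm{ad}_{Y'}^2(\m)$, where $Q$ lives. There is also a sign slip: since $R(Z,P)P=-[[Z,P],P]$ while $R(P,Z)X=[[Z,P],X]$, the first-order expansion of~\eqref{eq:K21} is $K(X,X,P,[[Z,P],P])=-K(P,P,X,[[Z,P],X])$.

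The missing idea is to avoid working in $\m$ for general $n$ at all and reduce to $n\le 4$.
\begin{itemize}
\item Write $Y'$ as a combination of $u\odot u$ with $u\perp e$, and $X_2$ as a combination of $w\odot w$. By multilinearity it suffices to show $K(x\odot x,w\odot w,u\odot u,v\odot v)=0$ for $u,v\in\br x\cup x^\perp$ and arbitrary $w$.
\item All four arguments lie in $\m'=V\odot V$ with $V=\Span(x,u,v,w)$. This is a Lie triple system: the tangent space of a totally geodesic copy of $\GL(\dim V)/\SO(\dim V)$ with $\dim V\le 4$. Hence $K|_{\m'}$ still satisfies \eqref{eq:K2c}, \eqref{eq:K21} and \eqref{eq:K22}.
\item For $n\le 4$ the paper has already established that every element of $\sT(\m)$ has the form~\eqref{eq:K4}: for $n\le 2$ from known results, for $n=3,4$ by direct computer algebra.
\item Each term of~\eqref{eq:K4} contains $[X_1,X_3]$ or $[X_1,X_4]$, so it vanishes under the hypothesis.
\end{itemize}
This turns your open-ended analysis for $n\ge5$ into a finite check in dimension at most $4$. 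Without this reduction, or a completed version of your linearisation argument (which may also need~\eqref{eq:K22}), the lemma is not established.
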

\begin{proof}
  The tensor $K$ is symmetric by its last two arguments by~\eqref{eq:K2c}, and so it is sufficient to show that $K(X_1,X_2,X_3,X_3) = 0$ for any $X_1,X_2,X_3 \in \m$ with $[X_1,X_3]=0$. Furthermore, by Lemma~\ref{l:rk1sym} it is sufficient to prove this fact under an additional assumption that $X_1$ is simple, and so it is sufficient to show that $K(X_1,X_2,X_3,X_4) = 0$ when $[X_1,X_3]=[X_1,X_4]=0$ and $X_1=x \odot x$ is simple. Moreover, as the subspace of elements of $\m$ commuting with $x \odot x, \, x \ne 0$, is spanned by $x \odot x$ and $x^\perp \odot x^\perp$, it suffices to prove that $K(X_1,X_2,X_3,X_4) = 0$ when $X_3=u \odot u, \, X_4=v \odot v$, with $u, v \in (\br \, x) \cup (x^\perp)$ and $X_2=w \odot w$, with an arbitrary $w \in \br^n$. We denote $V = \Span(x,u,v,w)$ and $\m' = V \odot V \subset \m$. The subspace $\m' \subset \m$ is a Lie triple system, and so the restriction $K'$ of the tensor $K$ to $\m'$ still satisfies the equations~\eqref{eq:K2c}, \eqref{eq:K21} and~\eqref{eq:K22}, and $K(X_1,X_2,X_3,X_4) = K'(X_1,X_2,X_3,X_4)$. As for $n \le 4$ the tensor $K'$ has the form given in~\eqref{eq:K4}, we obtain that $K'(X_1,X_2,X_3,X_4) = 0$ when $[X_1,X_3]=[X_1,X_4]=0$, which completes the proof.
\end{proof}

The next step in the proof is the following.
\begin{proposition} \label{p:SLFXYZ}
Assume $n \ge 3$.
  \begin{enumerate}[label=\emph{(\alph*)},ref=\alph*]
  \item \label{it:SLF2}
  Suppose $F: \m \times \m \to \br$ is a bilinear map such that $F(X,Y) = 0$ when $[X,Y] = 0$. Then there exists a linear map $f: \h \to \br$ such that $F(X,Y) = f([X,Y])$.

  \item \label{it:SLF3}
  Suppose $F: \m \times \m \times \m \to \br$ is a trilinear map such that $F(X,Y,Z) = F(X,Z,Y)$, for all $X, Y, Z \in \m$, and $F(X,Y,Z) = 0$, for all $X, Y, Z \in \m$ such that $[X, Y] = [X, Z] = 0$. Then there exists a linear map $\Phi: \m \to \h$ such that $F(X,Y,Z) = \<\Phi Z, [X, Y]\> + \<\Phi Y, [X, Z]\>$.
\end{enumerate}
\end{proposition}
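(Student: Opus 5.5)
The plan is to restrict everything to simple elements $x \odot x$. These span $\m$, and a multilinear form on $\m$ is determined by its values on them. So it is enough to produce the required $f$ (resp. $\Phi$) and check the identity for $X = x\odot x$, $Y = y \odot y$, $Z = z \odot z$. Recall that $[x\odot x, y\odot y] = 4\<x,y\>\, x\wedge y$. Also, a quadratic map $z \mapsto \psi(z) \in \so(n)$ is the same as a linear map $\m = \br^n\cdot\br^n \to \h$.

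\eqref{it:SLF2} Put $G(x,y) = F(x\odot x, y\odot y)$, which is biquadratic and vanishes whenever $\<x,y\>=0$. For $n\ge 3$ the quadric $\<x,y\>=0$ in $\br^n\times\br^n$ is irreducible, so $G = \<x,y\>\, Q(x,y)$ with $Q$ bilinear. Hence $Q(x,y) = \<Ax,y\>$ for some linear map $A$ on $\br^n$. Since $[X,X]=0$, we have $G(x,x)=0$, which gives $\<Ax,x\>=0$, so $A \in \so(n)$. Then $F(X,Y) = \<L,[X,Y]\>$ on simple elements with $L = -\tfrac14 A$, and bilinearity finishes the argument, taking $f = \<L,\cdot\>$.

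\eqref{it:SLF3} Put $G(x,y,z) = F(x\odot x,y\odot y,z\odot z)$, which is quadratic in each argument, symmetric in $y,z$, and vanishes when $x \perp y$ and $x\perp z$. The linear forms $\<x,y\>$ and $\<x,z\>$ form a regular sequence in the polynomial ring in all the variables. Ideal membership (Koszul/Nullstellensatz for a complete intersection of two independent linear forms in $x$, made polynomial by a direct coefficient argument as in the proof of Proposition~\ref{p:FXYZ}\eqref{it:F3}) then gives
\[
G(x,y,z) = \<x,y\>\,T_1(x,y,z) + \<x,z\>\,T_2(x,y,z),
\]
with $T_1$ bilinear in $(x,y)$ and quadratic in $z$. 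Symmetrising in $y \leftrightarrow z$, we may take $T_2(x,y,z) = T_1(x,z,y)$, at the cost of adding terms in $\<x,y\>\<x,z\>$ that can be absorbed. So we write $T_1(x,y,z) = \<M(z)y,x\>$ with $M$ quadratic in $z$ and valued in $\gl(n)$.

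Next I split $M(z)$ into skew and symmetric parts and use the remaining vanishing conditions. The first is $F(X,X,Z)=0$ for $[X,Z]=0$, which gives $G(x,x,z)=0$ for $x\perp z$ and hence $\|x\|^2\<M(z)x,x\> = 0$ on $z^\perp$. This forces the symmetric part of $M(z)$ to have the form $z \odot a(z)$ for some linear (in the quadratic sense) vector-valued $a$. The second is $F(X,X,X)=0$. The point is that a term $\<x,y\>\<(z\odot a(z))y,x\>$ contains a factor $\<x,z\>$, so it can be moved into the $\<x,z\>$-term. There it must recombine with the symmetric part coming from the $y\leftrightarrow z$ swap, and I expect the identities above to cancel it, up to a piece divisible by $\<x,y\>\<x,z\>$ that is itself expressible with skew coefficients.

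After this one should reach $G(x,y,z) = \<x,y\>\<\psi(z)y,x\> + \<x,z\>\<\psi(y)z,x\>$ with $\psi(z)\in\so(n)$ quadratic in $z$. Setting $\Phi(z\odot z) = -\tfrac14\psi(z)$ then gives the claimed form on simple elements, and trilinearity extends it to all of $\m$.

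The main obstacle is this last bookkeeping step: showing rigorously that the symmetric parts and the $\<x,y\>\<x,z\>$ ambiguity cancel, so that the coefficients can be chosen skew. If the direct redistribution gets messy, I would fall back on the method of Proposition~\ref{p:FXYZ}\eqref{it:F3}. That is, fix an orthonormal basis, use the vanishing conditions with $x = e_i$ and $y,z$ ranging over $e_i^\perp$ and $\br e_i$ to pin down the coefficients, and use $n \ge 3$ to have enough room for the test vectors.
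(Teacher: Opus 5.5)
Your route is the paper's: restrict to simple elements, write $G=\<x,y\>T_1+\<x,z\>T_2$, then normalise using $G(x,x,z)=0$ for $x\perp z$ and $G(x,x,x)=0$. Part (a) matches the paper.

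\textbf{The gap.} It is in the step you treat as routine: the ideal membership $G\in\cI=(\<x,y\>,\<x,z\>)$. This is where the paper does most of its work for (b).
\begin{enumerate}
\item The generators are quadrics in $\br[x,y,z]$, not linear forms. That they form a regular sequence makes $\cI$ unmixed, but does not by itself make it radical. So the Nullstellensatz only gives $G^l\in\cI$, and only over $\bc$, after you also know that the real zero set is Zariski dense in the complex one.
\item Treating them as linear forms in $x$ for fixed $(y,z)$ gives coefficients that are merely rational in $(y,z)$, singular where $y\parallel z$. The coefficient argument of Proposition~\ref{p:FXYZ}\eqref{it:F3} is specific to trilinear forms, so it does not clear these denominators.
\item This step is exactly where $n\ge 3$ is needed, and your sketch of (b) never invokes it.
\end{enumerate}
The paper proves that $\cI$ is prime. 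It uses Nagata's theorem that $\bc[x,y,z]/(\<x,y\>)$ is a UFD (the form has rank $2n\ge 6$), shows the image of $\<x,z\>$ is irreducible there, and uses a parametrisation with $x_1\ne 0$ to get Zariski density of the real points.

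A repair closer to your idea:
\begin{enumerate}
\item In $\br[x,y,z][x_1^{-1}]$, replace $y_1,z_1$ by $u=\<x,y\>$ and $v=\<x,z\>$.
\item The $(u,v)$-free part of $G$ vanishes, so $x_1^mG\in\cI$ for some $m$. Likewise $x_i^mG\in\cI$ for every $i$.
\item Since $\cI$ is a complete intersection, its associated primes all have height $2$. The ideal $(x_1,\dots,x_n)$ has height $n\ge 3$, so by prime avoidance it contains a non-zero-divisor $g$ modulo $\cI$.
\item For large $N$ we have $g^NG\in\cI$, hence $G\in\cI$.
\end{enumerate}

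\textbf{The step you flag as the main obstacle.} This one is routine, although your description of it is off. Of the two summands of $\<(z\odot a)y,x\>=\<z,y\>\<a,x\>+\<a,y\>\<z,x\>$, only the second carries the factor $\<x,z\>$. As in the paper, trade the other summand for a skew term. Put $Dz=2a(z)$, so that $\<M(z)x,x\>=\<x,z\>\<Dz,x\>$. Then
$$\<M(z)y,x\>=\<x,z\>\<Dz,y\>+\<\psi(z)y,x\>,\qquad \psi(z)=M^{\mathrm{skew}}(z)-\tfrac12\, z\wedge Dz\in\so(n).$$
Now $G(x,x,x)=2\|x\|^4\<Dx,x\>=0$ gives $D\in\so(n)$. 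The two $\<x,y\>\<x,z\>$ terms then cancel, because $\<Dz,y\>+\<Dy,z\>=0$.

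Two smaller points:
\begin{itemize}
\item Your early symmetrisation $T_2(x,y,z)=T_1(x,z,y)$ is exact, so no extra terms need absorbing.
\item With your convention $\<\psi(z)y,x\>$, the final constant is $\Phi(z\odot z)=+\tfrac14\psi(z)$, not $-\tfrac14\psi(z)$.
\end{itemize}
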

\begin{proof}
  \eqref{it:SLF2} From the assumption, the polynomial $P(x,y)=F(x \odot x, y \odot y)$ is zero when either $x \perp y$ or $x = y$. The polynomial $\<x,y\> \in \br[x,y]$ is irreducible when $n \ge 2$, and so the principal ideal $\cI \subset \br[x,y]$ generated by $\<x,y\>$ it is also irreducible. As $P(x,y) = 0$ when $\<x,y\> = 0$, we obtain that $P(x,y)$ is divisible $\<x,y\>$. Comparing the degrees we find that there is a linear operator $L$ on $\br^n$ such $P(x,y)=\<x,y\>\<Lx,y\>$. As $P(x,x)=0$ we obtain $L \in \so(n)$, and so $F(x \odot x, y \odot y) = -\frac14 \<L, [x \odot x, y \odot y]\>$, for all $x, y \in \br^n$, and the claim follows by bilinearity.

  \eqref{it:SLF3}
  From the assumption, the polynomial $P(x,y,z)=F(x \odot x, y \odot y, z \odot z)$ is zero when $y,z \in (\br \, x) \cup (x^\perp)$. We first show that the fact that $P(x,y,z)=0$ when $\<x,y\>=\<x,z\> = 0$ implies that $P$ belongs to the ideal of $\br[x,y,z]$ generated by the polynomials $\<x,y\>$ and $\<x,z\>$. We complexify everything and consider the polynomials $\<x,y\>=\sum_{i=1}^n x_iy_i,\, \<x,z\>=\sum_{i=1}^{n} x_iz_i$ and $P$ as the elements of $\bc[x,y,z]$. Note that $P$ is still zero on the variety in $\bc^{3n}$ defined by $\<x,y\>=\<x,z\> = 0$. Indeed, this variety contains all the points $(x,y,z) \in \bc^{3n}$ such that $x=(s,u)^t,\, y = (-\<u,v\>, sv)^t, \, z = (-\<u,w\>, sw)^t$, for all $(u,v,w,s) \in \br^{n-1} \oplus \br^{n-1} \oplus \br^{n-1} \oplus \br$, and hence all the points $(x,y,z) \in \bc^{3n}$ given by the same formulas with $(u,v,w,s) \in \bc^{n-1} \oplus \bc^{n-1} \oplus \bc^{n-1} \oplus \bc$. But the latter is the subset of $\bc^{3n}$ defined by the equations $\<x,y\>=\<x,z\> = 0$ and the inequality $x_1 \ne 0$. Its Zariski closure is the variety defined by the equations $\<x,y\>=\<x,z\> = 0$. Consider the principal ideal $\cI=\cI(\<x,y\>) \subset \bc[x,y,z]$ and the quotient ring $\mathcal{R} = \bc[x,y,z]/\cI$. As $n \ge 3$ by our assumption, the quadratic form $(x,y) \to \<x,y\>$ has rank at least $6$, and so the ring $\mathcal{R}$ is a \emph{unique factorisation domain} by~\cite{Nag}. What is more, the image $\pi\<x,z\>$ of the polynomial $\<x,z\>$ under the natural projection $\pi:\bc[x,y,z] \to \mathcal{R}$ is irreducible. Indeed, suppose there exist elements $P_1,P_2,P_3 \in \bc[x,y,z]$ such that we have a presentation $\<x,z\> = P_1P_2+\<x,y\>P_3$. Denote $m=\deg P_1 + \deg P_2$. If $m \ge 3$, then the product of the homogeneous components of $P_1$ and $P_2$ of the maximal degrees is divisible by $\<x,y\>$, and so one of these components, say the one from $P_1$, would be divisible by $\<x,y\>$. Then we can replace $P_1$ by a polynomial $P_1'$ of lower degree, such that $\pi(P_1)=\pi(P_1')$ and so $m$ becomes smaller. Repeating this process we obtain a presentation $\<x,z\> = Q_1Q_2+\<x,y\>Q_3$ such that $\pi(Q_1)=\pi (P_1), \, \pi(Q_2)=\pi (P_2)$ and $\deg Q_1 + \deg Q_2 \le 2$. Clearly $Q_1,Q_2 \ne 0$, as $\<x,z\>$ is not divisible by $\<x,y\>$. Suppose neither of $Q_1,Q_2$ is a nonzero constant. Then the only possibility is $\deg Q_1 = \deg Q_2 = 1$. Comparing the degrees we obtain that the constant terms of both $Q_1$ and $Q_2$ are zeros, and so both $Q_1$ and $Q_2$ are linear forms on $\bc^{3n}$, and then $Q_3 = c \in \bc$. But as $n \ge 3$, the quadratic form $\<x,z\> - c \<x,y\>$ is not a product of two linear forms. Hence $\pi(\<x,z\>)$ is irreducible in $\mathcal{R}$. As $P(x,y,z)=0$ when $\<x,y\>=\<x,z\> = 0$, there is an $l \ge 1$ such that $P^l$ lies in the ideal of $\bc[x,y,z]$ generated by $\<x,y\>$ and $\<x,z\>$, and so $\pi(P)^l$ is divisible by $\pi(\<x,z\>)$ in $\mathcal{R}$. Since $\pi(\<x,z\>)$ is irreducible, we obtain that $\pi(P)$ itself is divisible by $\pi(\<x,z\>)$ in $\mathcal{R}$. Hence $P$ lies in the ideal of $\bc[x,y,z]$ generated by $\<x,y\>$ and $\<x,z\>$, and as all three polynomials are real, the same as true in $\br[x,y,z]$.

  Thus we obtain
  \begin{equation}\label{eq:SLPxyz}
  P(x,y,z)= \<x,y\> T_1(x,y,z) + \<x,z\> T_2(x,y,z),
  \end{equation}
  and comparing the degrees we can take $T_1$ (respectively, $T_2$) to be linear in $x$ and $y$ and quadratic in $z$ (respectively, linear in $x$ and $z$ and quadratic in $y$). We know that $P(x,x,z) = 0$ when $x \perp z$, which by~\eqref{eq:SLPxyz} gives $T_1(x,x,z)=0$ when $\<x,z\> = 0$. As $\<x,z\>$ is irreducible, the polynomial $T_1(x,x,z)$ is divisible by $\<x,z\>$, and comparing the degrees we find a linear operator $C_1$ on $\br^n$ such that $T_1(x,x,z)= \<x,z\>\<C_1x,z\>$, for all $x,z \in \br^n$. As $T_1$ is linear by the first two components, we obtain $T_1(x,y,z) = \<x,z\>\<C_1y,z\> + \<L_1(z)x,y\>$, where $L_1$ is a quadratic form on $\br^n$ with the values in $\so(n)$. Similarly, $T_2(x,y,z) = \<x,y\>\<C_2y,z\> + \<L_2(y)x,z\>$, for some linear operator $C_2$ on $\br^n$ and a quadratic map $L_2$ from $\br^n$ to $\so(n)$. Moreover, we also know that $P(x,x,x)=0$. Substituting into~\eqref{eq:SLPxyz} we find that $C=C_1 + C_2 \in \so(n)$. Then~\eqref{eq:SLPxyz} becomes $P(x,y,z)= \<x,y\> \<x,z\> \<Cy,z\> + \<x,y\> \<L_1(z)x,y\> + \<x,z\> \<L_2(y)x,z\>$. As by assumption, $P(x,y,z)=\frac12 (P(x,y,z) + P(x,z,y))$, and so $P(x,y,z) = \<x,y\> \<L(z)x,y\> + \<x,z\> \<L(y)x,z\>$, where $L=\frac12(L_1+L_2)$ is a quadratic form on $\br^n$ with the values in $\so(n)$. This can be written as $F(x \odot x, y \odot y, z \odot z) = -\frac14 \<L(z), [x \odot x, y \odot y]\> -\frac14 \<L(y), [x \odot x, z \odot z]\>$. But any quadratic form on $\br^n$ is the restriction of a linear form on $\m$ to simple vectors. It follows that for some linear map $\Phi: \m \to \h$, the required identity $F(X,Y,Z) = \<\Phi Z, [X, Y]\> + \<\Phi Y, [X, Z]\>$ holds for all simple elements $X,Y$ and $Z$, and the claim follows by trilinearity of $F$.
\end{proof}

\medskip

Now suppose that $K \in \sT(\m)$ (and $n \ge 3$). From Lemma~\ref{l:SLK1340}, $K(X_1,X_2,X_3,X_4) = 0$ whenever $[X_1,X_3]=[X_1,X_4]=0$. Then by Proposition~\ref{p:SLFXYZ}, there exists a bilinear map $\Psi: \m \times \m \to \h$ such that
\begin{equation} \label{eq:SLKPsi}
  K(X_1,X_2,X_3,X_4) = \<\Psi(X_2,X_3), [X_1,X_4]\> + \<\Psi(X_2,X_4), [X_1,X_3]\>,
\end{equation}
for all $X_1,X_2,X_3,X_4 \in \m$. From the symmetries~\eqref{eq:K2c} of $K$ we have $K(X_1,X_2,X_3,X_4) = K(X_4,X_3,X_2,X_1)$. Substituting into~\eqref{eq:SLKPsi} and then taking $[X_1,X_3] = 0$ in the resulting equation we obtain $\<\Psi(X_2,X_3)+\Psi(X_3,X_2), [X_1,X_4]\> + \<\Psi(X_3,X_1), [X_2,X_4]\>  = 0$, whenever $[X_1, X_3] = 0$, which gives
\begin{equation}\label{eq:SLPsi23}
  [\Psi(X_2,X_3)+\Psi(X_3,X_2), X_1] + [\Psi(X_3,X_1), X_2]  = 0, \quad \text{when } [X_1,X_3] = 0.
\end{equation}
Denote $A(X_2,X_3) = \Psi(X_2,X_3)+\Psi(X_3,X_2)$. Taking $X_1 = x \odot x$ and $X_2 = y \odot y$, with $x, y \in \br^n$, we obtain $A(y \odot y, X_3)x \odot x + \Psi(X_3, x \odot x)y \odot y = 0$, whenever $[x \odot x, X_3] = 0$. It follows that $\Psi(X_3, x \odot x)y \in \Span(x,y)$ when $x$ and $y$ are linearly independent (and $[x \odot x, X_3] = 0$). Fix $x \ne 0$ and $X_3$ which commutes with $x \odot x$. As $\Psi(X_3, x \odot x) \in \so(n)$, taking $y \perp x$ we obtain $\Psi(X_3, x \odot x) = x \wedge u(x, X_3)$, for some $u(x, X_3) \in x^\perp$. Since $n \ge 3$, we can take $y=x+v$, where $v \perp x, u$ and $v \ne 0$. Then $\Psi(X_3, x \odot x)y = -\|x\|^2 u(x, X_3)$, which lies in $\Span(x,x+v)$ only when $u(x, X_3)=0$. It follows that $\Psi(X_3, x \odot x) = 0$ when $[x \odot x, X_3] = 0$, and so by Lemma~\ref{l:rk1sym} and Proposition~\ref{p:SLFXYZ}\eqref{it:SLF2}, there exists a linear map $\psi: \h \to \h$ such that $\Psi(X,Y) = \psi([X,Y])$, for all $X, Y \in \m$. Then from~\eqref{eq:SLKPsi} we get $K(X_1,X_2,X_3,X_4) = \<\psi([X_2,X_3]), [X_1,X_4]\> + \<\psi([X_2,X_4]), [X_1,X_3]\>$. From~\eqref{eq:K2c} we obtain $K(X_1,X_2,X_3,X_4) = \frac12(K(X_1,X_2,X_3,X_4) + K(X_2,X_1,X_3,X_4))= \<S([X_2,X_3]), [X_1,X_4]\> + \<S([X_2,X_4]), [X_1,X_3]\>$, where $S =\frac12(\psi + \psi^t)$ is a symmetric operator on $\h$, as given by~\eqref{eq:K4}.

This completes the proof of Theorem~\ref{t:SLSO}.

\end{document}